\newtheorem{remark}{Remark}
\newtheorem{theorem}{Theorem}
\def\b0{{\bf 0}}
\def\g{{\bf g}}
\def\u{{\bf u}}
\def\btau{\boldsymbol{\tau}}
\def\S{\mathrm{\bf S}}
\def\U{\mathrm{\bf U}}
\def\V{\mathrm{\bf V}}
\def\Id{\mathrm{\bf Id}}
\def\div{\mathrm{div}}
\title{Non-isochoric stable granular models taking into account fluidisation by pore gas pressure}
\author{
Laurent Chupin\thanks{Universit\'e Clermont Auvergne, CNRS, LMBP, F-63000 Clermont-Ferrand, France},~
Thierry Dubois$^*$
}
\date{}
\begin{document}
%%%%%%%%%%%%%%%%%%%%%%%%%%%%%%%%%%%%%%
%%%%%%%%%%%%%%%%%%%%%%%%%%%%%%%%%%%%%%
%%%%%%%%%%%%%%%%%%%%%%%%%%%%%%%%%%%%%%

\setlength{\parindent}{0pt}% pour supprimer l'indentation au début de chaque paragraphe

\maketitle

%\tableofcontents

\begin{abstract}
In this paper, we study non-isochoric models for mixtures of solid particles, at high volume concentration, and a gas. 
One of the motivations of this work concerns geophysics and more particularly the pyroclastic density currents which are precisely mixtures of pyroclast and lithic fragments and air.
They are extremely destructive phenomena, capable of devastating urbanised areas, and are known to propagate over long distances, even over almost flat topography.
Fluidisation of these dense granular flows by pore gas pressure is one response that could explain this behaviour and must therefore be taken into account in the models.
Starting from a gas-solid mixing model and invoking the compressibility of the gas, through a law of state, we rewrite the conservation of mass equation of the gas phase into an equation on the pore gas pressure whose net effect is to reduce the friction between the particles.
The momentum equation of the solid phase is completed by generic constitutive laws, specified as in Schaeffer \textit{et al} (2019, Journal of Fluid Mechanics,~\textbf{874}, 926--951) by a yield function and a dilatancy function. 
Therefore, the divergence of the velocity field, which reflects the ability of the granular flow to expand or compress, depends on the volume fraction, pressure, strain rate and inertial number. 
In addition, we require the dilatancy function to describe the rate of volume change of the granular material near an isochoric equilibrium state, \textit{i.e.} at constant volume.
This property ensures that the volume fraction, which is the solution to the conservation of mass equation, is positive and finite at all times. We also require that the non-isochoric fluidised model is linearly stable and dissipates energy (over time).  In this theoretical framework, we derive the dilatancy models corresponding to classical rheologies such as Drucker-Prager and~$\mu(I)$ (with or without expansion effects). The main result of this work is to show that it is possible to obtain non-isochoric and fluidised granular models satisfying all the properties necessary to correctly account for the physics of granular flows and being well-posed, at least linearly stable.
\end{abstract}

%%%%%%%%%%%%%%%%%%%%%%%%%%%%%%%%%%%%%%%%%
%%%%%%%%%%%%%%%%%%%%%%%%%%%%%%%%%%%%%%%%%
%%%%%%%%%%%%%%%%%%%%%%%%%%%%%%%%%%%%%%%%%
\section{Introduction}
%%%%%%%%%%%%%%%%%%%%%%%%%%%%%%%%%%%%%%%%%
%%%%%%%%%%%%%%%%%%%%%%%%%%%%%%%%%%%%%%%%%
%%%%%%%%%%%%%%%%%%%%%%%%%%%%%%%%%%%%%%%%%

Dense granular flows are present in many research areas, \textit{e.g.} chemistry, geophysics, biology, engineering and mathematics, in industrial applications, \textit{e.g.} pharmaceutical production processes, food industry, construction engineering, as well as in nature. In the latter case, they constitute a major source of potential danger to human life, buildings and infrastructure in inhabited areas, for example, in the case of landslides, which may be caused by seismic activity, soil instability or volcanic eruptions.
Landslides, which are a potential source of tsunamis, can be subaerial or submarine and are therefore characterized at first glance by a granular medium submerged in air or water.
%In the context of volcanic activity, tsunamis (see for instance~\cite{Paris_2015}) are most often caused by dense pyroclastic currents (PDCs).
Pyroclastic density currents (PDCs), see~\cite{Druitt} for instance, which are mixtures of solid particles (pyroclasts and lithic fragments) and air, are one of the most significant hazards of volcanic eruptions. PDCs are often characterized by the presence of a dense basal flow, which behaves like a fluid and can travel long distances from the source of the eruption, sometimes more than~100 km. Although understanding the mechanisms responsible for this particular behaviour of concentrated PDCs is one of the main scientific questions related to volcanic processes, modelling the interactions between the dense granular medium and the interstitial gas (air) remains a real challenge.

Due to the large amount of material carried out in volcanic processes, the small size of the particles (between~$10$ and~$100$ microns) in laboratory experiments and, the relatively high volume concentration, in the order of~$40$\% for moderately expanded flow to~$60$\% for dense flow, it seems preferable to consider the granular material and the fluid containing it as a continuous medium.
In their pioneering work on solid (particle) and fluid mixtures, Anderson and Jackson~\cite{anderson67} derived a system of coupled equations based on the principles of conservation of mass and momentum. A drag force reflects the solid/fluid interaction in each phase. 
In the particular case of a gas, by using among other things the fact that the viscosity and density of the gas phase are low, the system can be simplified resulting in a Darcy-type law relating the velocities of the two phases. An equation relating the density and pressure is then deduced from the mass conservation equation of the gas phase. For compressible gases, the system is closed with a state law, which allows the density to be eliminated as an unknown and an equation verified by the pressure of the interstitial gas is obtained. The resulting fluidized gas/particle mixing model is then formed by the conservation of mass and momentum equations for the solid phase, supplemented by the pore gas pressure equation. 
The latter intervenes through its gradient in the conservation of momentum equation. The net effect of the fluidisation is therefore to decrease the solid pressure.  
We thus obtain a system of coupled equations which models the granular flow while taking into account the interstitial gas through its pressure effects. However, a rheology for the solid phase still needs to be specified in order to have a complete and closed system.

A granular medium flows only if the stress exceeds a threshold, the yield stress, otherwise it does not deform and behaves like a solid.
The most advanced model and certainly the most used for simulating granular flows as a continuum and that accounts for this peculiar behaviour is the $\mu(I)$-rheology proposed by Jop \textit{et al}~\cite{jop_etal}. The underlying yield criterion of the $\mu(I)$-model is of the Drucker-Prager type, \textit{i.e.} the internal friction coefficient of the granular material is proportional to the solid pressure. 
Recall that the pore gas pressure acts in the granular momentum conservation equation by decreasing the solid pressure, so the friction between particles is reduced and the granular flow is fluidized, allowing it to travel greater distances. 
The question of a possible fluidisation effect of the pore gas pressure has been addressed through laboratory experiments of the collapse of dense granular columns fluidized by air injection from below in~\cite{roche12}. 
Indeed, it has been shown in~\cite{roche12} that fluidized columns flow over distances twice as long as non-fluidised columns.

Since the work of Jop \textit{et al}~\cite{jop_etal}, the $\mu(I)$-rheology has been at the centre of many researches both for its contribution in the production of numerical results, let us quote for example~\cite{lagree11,ionescu15,martin17}, and for theoretical questions related to the well-posedness of granular models based on this constitutive law~\cite{barker15,barker17,barker_gray_2017,goddard17,schaeffer19}.   
For dense granular flows with a volume concentration~$\phi$ close to the packing limit, on the order of~$60$\% per unit volume, the variation in~$\phi$ can reasonably be neglected.  In this case, the mass conservation equation implies that the granular flow is incompressible (isochoric), neglecting to model the packing and dilatant effects of the granular material when sheared. This approach, because of its apparent simplicity, is attractive and has been used to produce numerical simulations for granular column collapse, see for instance~\cite{lagree11,ionescu15}. Whilst successful results have been obtained for predicting the profile of the granular mass during its collapse as well as for estimating the velocity of the front, the incompressible~$\mu(I)$-model is ill-posed, in the sense of Hadamard, as demonstrated by Barker \textit{et al.} in~\cite{barker15}.  Indeed, at low and high inertial numbers, small perturbations grow at an exponential rate in the high wavenumber limit. The numerical solutions, when they do not blow up, depend on the grid with bands of high gradients appearing on the strain rate and pressure, as shown in~\cite{martin17}. Note that similar instabilities have been observed with a viscous Drucker-Prager model, see for example~\cite{martin17,chupin21}. A simple way to circumvent the ill-posedness of the incompressible~$\mu(I)$-model is to regularise the constitutive law as proposed in~\cite{barker_gray_2017} and implemented in~\cite{Gesenhues19}.

Taking into account the expansion of the granular material, when it is sheared, allows to regularise the~$\mu(I)$-rheology and to obtain a linearly stable model, as shown in~\cite{barker17,heyman_2017,schaeffer19}.
In~\cite{barker17}, Barker \textit{et al} introduced a yield function and a dilatancy function, both of which depend on the volume fraction, the (solid) pressure and the inertial number. The divergence of the velocity field is assumed to be proportional to the dilatancy function and the strain rate, which makes local variations of the volume of the granular medium possible. Also, the yield condition specifies the deviatoric stress in terms of a yield function. Note that, unlike the~$\mu(I)-\Phi(I)$ rheology which provides a state law defining the pressure in terms of the volume fraction, here the volume fraction evolves with the flow according to the conservation of mass equation and the rate of volume change is specified according to the yield function. As shown in~\cite{barker17}, if the yield and dilatancy functions satisfy three conditions, namely one equation and two inequalities, then the resulting compressible~$\mu(I)$-model is linearly well-posed.  

Without imposing any further constraints, it is possible to find yield and dilatancy functions that satisfy these stability conditions. However, they are in this case obtained by purely mathematical argumentation and do not take into account the physics of granular flows. On the other hand, Pailha and Pouliquen in~\cite{pailha09} have proposed a dilatancy model based on the critical state theory proposed by Roux and Radjaï (see~\cite{roux98}). This theory introduces a dilatation angle which reflects the need for a granular material to expand, by increasing the volume it occupies, when sheared. Unfortunately, the resulting constitutive laws, rewritten in the terms of the theory developed in~\cite{barker17}, do not satisfy the stability conditions so that the model may not be well-posed.

The objective of this work is to propose non-isochoric granular models, \textit{i.e.} with local variations of the volume, which take into account the effects of fluidisation by the pore gas pressure. The constitutive laws are written in terms of two functions as in the linear stability theory developed by Barker \textit{et al}~\cite{barker17,schaeffer19}.  We require that these models are linearly stable, that they are compatible with the dilatancy model of Roux and Radjaï~\cite{roux98}, that they dissipate energy over time, and that the volume fraction, solution of the conservation of mass equation, is for any time positive and bounded from above.
By compatibility with the Roux and Radjaï dilatancy model, we mean that at equilibrium the divergence of the velocity field must be zero and that the rate of volume change depends on the deviation of the volume fraction from the equilibrium state. It should be noted that the requirement that the energy of the system must be dissipated over time is motivated by two reasons, one numerical and one theoretical. Indeed, if the energy is dissipated, one can hope to prove that the model is well posed, \textit{i.e.} that a solution exists and that it can be unique. Moreover, in order to develop stable numerical schemes,~\textit{i.e.} with bounded solutions, it is more than desirable that the continuous model is dissipative. In this framework, we derive the dilatancy models obtained for specific choices of classical rheologies, such as Drucker-Prager and~$\mu(I)$. This approach allows us to derive non-isochoric fluidised granular models with the above mentioned properties.

The paper is organised as follows. In Section~\ref{sec:solid-gaz-model}, a solid-gas mixing model derived from the Anderson and Jackson's equations is described. The fluidisation of granular flows by compressible gases, in the case of a general state law, is studied in Section~\ref{sec:fluidisation}. The special case of a perfect gas (such as air) is also discussed.
In Section~\ref{sec:generic}, the fluidized model is completed by generic constitutive laws: the yield condition and the dilatancy law are defined by introducing functions similar to those used in~\cite{barker17}. Section~\ref{sec:main-properties} is devoted to the study of the main properties of this generic fluidised and non-isochoric granular model, namely energy dissipation, compatibility of the dilatancy law with equilibrium conditions, linear stability and volume fraction bounds. 
Finally, within this theoretical framework, dilatancy laws corresponding to classical rheologies, such as Drucker-Prager and~$\mu(I)$, are derived in Section~\ref{sec:examples}. The resulting models of fluidised and non-isochoric granular flows satisfy all the aforementioned properties.

%%%%%%%%%%%%%%%%%%%%%%%%%%%%%%%%%%%%%%%%%
%%%%%%%%%%%%%%%%%%%%%%%%%%%%%%%%%%%%%%%%%
%%%%%%%%%%%%%%%%%%%%%%%%%%%%%%%%%%%%%%%%%
\section{Solid-gas two-phase model}\label{sec:solid-gaz-model}
%%%%%%%%%%%%%%%%%%%%%%%%%%%%%%%%%%%%%%%%%
%%%%%%%%%%%%%%%%%%%%%%%%%%%%%%%%%%%%%%%%%
%%%%%%%%%%%%%%%%%%%%%%%%%%%%%%%%%%%%%%%%%
\subsection{Governing equations}
%\paragraph{Mass conservation -}
We consider a mixture of solid particles, \textit{i.e.} a granular phase of (constant) density~$\rho_s$, and a gas whose (variable) density is noted~$\rho_f$.
If~$\phi$ denotes the local volume fraction of the particles within the mixture, then the mass conservation for both constituents is written
\begin{align}
& \partial_t (\phi \rho_s) + \div ( \phi \rho_s \u ) = 0, \label{mass1} \\
& \partial_t ((1-\phi) \rho_f) + \div ( (1-\phi) \rho_f \u_f ) = 0, \label{mass2}
\end{align}
where~$\u$ and~$\u_f$ correspond respectively to the velocity of the granular phase and the gas phase.

%\paragraph{Momentum conservation -}
The conservation of momentum equations for the two phases involve the forces between the two components. 
These equations are derived from Jackson's book~\cite{jackson00}.
A detailed explanation of each of the terms involved is also given by Pitman and Le in~\cite[Appendix A]{pitman05}. This system of equations is written as
\begin{align}
& \phi \rho_s \big( \partial_t \u + \u \boldsymbol{\cdot} \nabla \u \big) = \phi \rho_s \g  - \nabla p + \div \, \btau  - \phi \nabla p_f + \beta(\phi) (\u_f - \u),\label{mouv1} \\
& (1-\phi) \rho_f \big( \partial_t \u_f + \u_f \boldsymbol{\cdot} \nabla \u_f \big) = (1-\phi) \rho_f \g  - (1-\phi) \nabla p_f - \beta(\phi) (\u_f - \u),\label{mouv2}
\end{align}
where~$p$ and~$p_f$ correspond to the pressures within each of the phases, \textit{i.e.} the granular phase and the gas phase respectively.
The tensor~$\btau$ expresses the extra-stresses associated with the granular phase (whereas it is assumed that the only stresses associated with the gas phase are due to pressure). An explicit expression of~$\btau$ as a function of~$\phi$, $p$ and~$\nabla \u$ will be specified later in the section on the rheology of the medium.
Finally,~$\beta$ is a drag coefficient which depends on~$\phi$ and which, according to Andreotti {\it et al.}~\cite[p.290]{andreotti12}, can be written
\begin{equation}\label{beta}
\beta(\phi) = \frac{150 \, \eta_f \phi^2}{d^2 (1-\phi)},
\end{equation}
where~$d$ is the diameter of the grain and~$\eta_f$ the viscosity of the gas.

%\paragraph{Darcy approximation for the gas phase -}
Following the work of Anderson \textit{et al}~\cite[p.331]{anderson95} (see also~\cite[equation (3.12)]{pailha09}) which compare the different contributions in the fluid momentum conservation, we could introduced approximations associated with the smallness of density of a typical gas. The equation~\eqref{mouv2} reduces to
\begin{align}\label{darcy}
\beta(\phi) (\u_f - \u) = - (1-\phi) \nabla p_f.
\end{align}
This equation can be seen as a Darcy law: it allows us to express the fluid velocity~$\u_f$ with respect to the solid one~$\u$, the gradient of the fluid pressure~$p_f$ and the volume solid fraction~$\phi$.
The equations~\eqref{mass1}, \eqref{mass2} and~\eqref{mouv1} now read
\begin{align}
& \partial_t \phi + \div ( \phi \, \u ) = 0, \label{syst1} \\
& \partial_t ((1-\phi) \rho_f) + \div ( (1-\phi) \rho_f \u ) = \div ( \kappa(\phi) \rho_f \nabla p_f ), \label{syst2} \\
& \phi \rho_s \big( \partial_t \u + \u \boldsymbol{\cdot} \nabla \u \big) = \phi \rho_s \g  - \nabla p + \div \, \btau  - \nabla p_f. \label{syst3}
\end{align}
where $\displaystyle \kappa(\phi) = \frac{(1-\phi)^2}{\beta(\phi)} = \frac{d^2 (1-\phi)^3}{150 \, \eta_f \phi^2}$. Note that this coefficient can be related to the permeability of the material {\it via} the well-known Carman-Kozeny relationship, see~\cite{carman37} or~\cite{carman97}, and~\cite{kozeny27}.

\subsection{Energy estimate}
One of the key points we wish to emphasise in this article is that the proposed model is energetically consistent, \textit{i.e.} it has an energy that decreases over time (in the absence of external forces, such gravity forces).
The energy estimates associated with this type of flow are generally obtained by performing the scalar product of the conservation of momentum equation~\eqref{syst3} by the velocity~$\u$, then integrating with respect to the spatial variable. Here, we deduce 
\begin{equation}\label{energy1}
    \int \phi \partial_t \Big(\frac{\rho_s |\u|^2}{2} \Big) + \int \phi \u\cdot \nabla \Big(\frac{\rho_s |\u|^2}{2} \Big) = \int \phi \rho_s \g \cdot \u - \int \nabla p \cdot \u + \int (\div\, \btau)\cdot \u - \int \nabla p_f\cdot \u.
\end{equation}
Note that throughout this document, the integrations with respect to the space variable are noted~$\int$. They are performed on a domain $\Omega\subset \mathbb R^2$ on which we will assume that there is no exchange with the outside: the velocities and normal stresses are assumed to be zero on the boundary~$\partial \Omega$ so that there will never be any boundary terms due to the various integrations by parts.

Multiplying then~\eqref{syst1} by $\frac{1}{2}\rho_s |\u|^2$ and integrating, we also obtain
\begin{equation}\label{energy2}
    \int \partial_t \phi \Big(\frac{\rho_s |\u|^2}{2} \Big) + \int \div(\phi \u) \Big(\frac{\rho_s |\u|^2}{2} \Big) = 0.
\end{equation}
The sum of the last two equality~\eqref{energy1} and~\eqref{energy2}, combined with integrations by parts, gives the following estimate:
\begin{equation}\label{energy3}
    \frac{d}{dt} \Big( \int \phi \frac{\rho_s |\u|^2}{2} \Big) = \int \phi \rho_s \g \cdot \u + \underbrace{\int p_f\, \div \, \u}_{A} - \underbrace{\int \btau : \nabla \u}_{B} + \underbrace{\int p\, \div \, \u}_{C}.
\end{equation}
%
%\paragraph{Fluidisation and dilatation~?}
So far, the three-equation model~\eqref{syst1}--\eqref{syst3} has six unknowns, namely $\phi$, $\u$, $\rho_f$, $p_f$, $p$ and~$\btau$. In order to close the system,~\eqref{syst1}--\eqref{syst3} must be completed by three closing relations. The terms A, B and C of the energy equation are closely related to the choice of closure laws. The term A depends on the pore gas pressure $p_f$ and thus reflects the fluidisation of the solid phase by the presence of the gas phase. We will see in the next section how an equation for~$p_f$ can be derived from equation~\eqref{syst2} by accounting for the gas compressibility through a (generic) state law.
A second closure relation is given by the constitutive law specifying the deviatoric stress tensor~$\btau$
as a function of~$\phi$, $\nabla \u$ and~$p$. The rheology must be physically consistent but must also allow the control of the term~$B$ in~\eqref{energy3}.
Finally, the last closure relation will specify the divergence of the velocity field~$\u$, which expresses the local change in volume and thus governs the expansion or compression of the flow when it is sheared. Note that the control of the terms~$A$ and~$C$ depends on $\div \, \u$. 

%%%%%%%%%%%%%%%%%%%%%%%%%%%%%%%%%%%%%%%%%
%%%%%%%%%%%%%%%%%%%%%%%%%%%%%%%%%%%%%%%%%
%%%%%%%%%%%%%%%%%%%%%%%%%%%%%%%%%%%%%%%%%
\section{Gas compressibility and fluidisation model}\label{sec:fluidisation}
%%%%%%%%%%%%%%%%%%%%%%%%%%%%%%%%%%%%%%%%%
%%%%%%%%%%%%%%%%%%%%%%%%%%%%%%%%%%%%%%%%%
%%%%%%%%%%%%%%%%%%%%%%%%%%%%%%%%%%%%%%%%%

The fluidisation phenomenon of the granular flow is mainly due to the fact that the gas trapped between the particles is compressible. In other words, its density~$\rho_f$ is not constant. Depending on the nature of the gas considered, a state law relating pressure~$p_f$ and density can be imposed, namely
\begin{equation}\label{gas}
    p_f = Q(\rho_f),
\end{equation}
where~$Q$ is a differentiable function. For instance, for an ideal gas, an affine relationship between pressure and density will be imposed (see Subsection~\ref{ssec:gaz-parfait}):
\begin{equation}\label{ideal-gas0}
Q(\rho_f) = p_{\mathrm{atm}} \Big( 1 - \frac{\rho_f}{\rho_f^0} \Big),
\end{equation}
where~$p_{\mathrm{atm}}=1.013 \times 10^{5} \, \mathrm{Pa}$ is the atmospheric pressure, and $\rho_f^0=1\, \mathrm{kg}.\mathrm{m}^{-3}$ corresponds to the density of air at atmospheric pressure. 

%%%%%%%%%%%%%%%%%%%%%%%%%%%%%%%%%%%%%%%%%
\subsection{Case of a general gas}\label{ssec:gaz-general}
%%%%%%%%%%%%%%%%%%%%%%%%%%%%%%%%%%%%%%%%%

In order to obtain an energy estimate and to compensate the term~$A$ in the global estimate~\eqref{energy3}, we take inspiration from the methods used for the study of compressible fluids, see for instance~\cite{lions98}.
For a general state law of the form~\eqref{gas}, the equation~\eqref{syst2} becomes
\begin{equation}\label{density-diffusion}
\partial_t ((1-\phi) {\rho}_f) + \div ( (1-\phi) {\rho}_f \u ) = \div ( \kappa(\phi) {\rho}_f {Q}'({\rho}_f) \nabla {\rho}_f ).
\end{equation}
First, we write the equation~\eqref{density-diffusion} in non-conservative form:
\begin{equation}\label{density-diffusion-non-conservative}
(1-\phi) \big( \partial_t {\rho}_f + \u \cdot \nabla {\rho}_f ) + {\rho}_f \div \, \u = \div \big( \kappa(\phi) {\rho}_f {Q}'({\rho}_f) \nabla {\rho}_f \big).
\end{equation}
By multiplying the last equation by ${H}'({\rho}_f)$ where ${H}:\mathbb R \rightarrow \mathbb R$ is any smooth function, we obtain
\begin{equation}\label{H-non-conservative}
(1-\phi) \big( \partial_t {H}({\rho}_f) + \u \cdot \nabla {H}({\rho}_f) ) + {\rho}_f {H}'({\rho}_f) \div \, \u = {H}'({\rho}_f) \div \big( \kappa(\phi) {\rho}_f {Q}'({\rho}_f) \nabla {\rho}_f \big),
\end{equation}
which can be rewritten in conservative form as
\begin{equation}\label{H-conservative}
\begin{aligned}
\partial_t ((1-\phi){H}({\rho}_f)) & + \div( (1-\phi){H}({\rho}_f) \u) \\
& + ({\rho}_f {H}'({\rho}_f) - {H}({\rho}_f)) \div \, \u = {H}'({\rho}_f) \div \big( \kappa(\phi) {\rho}_f {Q}'({\rho}_f) \nabla {\rho}_f \big).
\end{aligned}
\end{equation}
Noting that the term $A$ that we wish to control is $A = \int Q(\rho_f) \div \, \u$, it suffices to choose~${H}$ such that $x{H}'(x)-{H}(x) = {Q}(x)$ and to integrate with respect to the space variable, which leads to
\begin{equation}
- A = \frac{d}{dt} \Big( \int (1-\phi) {H}({\rho}_f) \Big) - \int {H}'({\rho}_f) \div \big( \kappa(\phi) {\rho}_f {Q}'({\rho}_f) \nabla {\rho}_f \big).
\end{equation}
Since $x{H}''(x)={Q}'(x)$, an integration by parts allows to rewrite the last term as
\begin{equation}\label{eq:energieA}
- A = \frac{d}{dt} \Big( \int (1-\phi)H(\rho_f) \Big) + \int \kappa(\phi) |\nabla p_f|^2.
\end{equation}

\begin{remark}
As previously, in all integrations by parts, the boundary terms are zero. In the latter case, this cancellation comes from the assumption of zero normal velocities at the boundary, and from Darcy's law~\eqref{darcy}. More precisely, on the boundary~$\partial \Omega$, we have
$$
\kappa(\phi) {\rho}_f {Q}'({\rho}_f) \nabla {\rho}_f.\mathbf{n} = -(1-\phi) {\rho}_f (\u_f-\u).\mathbf{n} = 0,$$
where $\mathbf{n}$ corresponds to the outward unit normal at the boundary.
\end{remark}

In practice, to determine the function~$H$ involved in the energy estimate from the function~$Q$ specifying the state law, we integrate the differential equation $x{H}'(x)-{H}(x) = {Q}(x)$. Indeed, we find
\begin{equation}\label{Q->H}
    H(x) = x\int_{c_1}^x \frac{Q(\zeta)}{\zeta^2}\mathrm d\zeta + c_2.
\end{equation}
The reader is referred to~\cite[p.36]{lions98} where similar calculations are conducted.
It can be noted here that the choice of integration constants~$c_1$ and~$c_2$ has no influence on the energy. Indeed, if we add a constant~$c_2$ to~$H$ then the quantity~$A$ appearing in equality~\eqref{eq:energieA} will be increased by $c_2 \frac{d}{dt}\big( \int (1-\phi) \big)$ which is zero due to the mass conservation equation~\eqref{syst1}.
In the same way, if we add a linear term~$c_1 x$ to~$H$ then the quantity~$A$ appearing in equality~\eqref{eq:energieA} will be increased by $c_1 \frac{d}{dt}\big( \int (1-\phi)\rho_f \big)$ which is zero due to~\eqref{density-diffusion} and the remark above.

%%%%%%%%%%%%%%%%%%%%%%%%%%%%%%%%%%%%%%%%%
\subsection{The particular case of an ideal gas}\label{ssec:gaz-parfait}
%%%%%%%%%%%%%%%%%%%%%%%%%%%%%%%%%%%%%%%%%

As mentioned at the beginning of this section, if the gas under consideration is air, it is reasonable to consider it as an ideal gas and to impose an affine relation between~$p_f$ and~$\rho_f$. More precisely, the relation~\eqref{ideal-gas0} can be written as (see~\cite{goren10}):
\begin{equation}\label{ideal-gas}
\rho_f = \rho_f^0 \Big( 1 + \frac{p_f}{p_{\mathrm{atm}}} \Big).
\end{equation}
It is then possible, for instance using $H=Q$ in~\eqref{H-conservative}, to derive from~\eqref{syst2} an equation describing the evolution of the pressure~$p_f$, namely
\begin{equation}\label{pressure-diffusion-nonlinear}
    \partial_t ((1-\phi) p_f) + \div ( (1-\phi) p_f \u ) + p_{\mathrm{atm}} \div \, \u = \div ( \kappa(\phi) (p_{\mathrm{atm}}+p_f) \nabla p_f ).
\end{equation}
Considering that the pore gas pressure in the granular medium is negligible compared to the atmospheric pressure, that is $p_f \ll p_{\mathrm{atm}}$, it is reasonable to approximate $p_f+p_\mathrm{atm}$ by $p_f$ in the right-hand side of the last equation. As a consequence, we obtain the following pressure "diffusion" equation
\begin{equation}\label{pressure-diffusion}
    \partial_t ((1-\phi) p_f) + \div ( (1-\phi) p_f \u ) + p_{\mathrm{atm}} \div \, \u = p_{\mathrm{atm}} \div ( \kappa(\phi) \nabla p_f ).
\end{equation}
In the remainder of this article, this equation will be used to describe the evolution of the pore gas pressure, although a more general model can be chosen (\textit{i.e.} for a general gas).

\begin{remark}
This convection-diffusion equation for the pore gas pressure is frequently used, sometimes in slightly different forms.
Thus, in~\cite[equation (7)]{goren10} or in~\cite[equation (7)]{mcnamara00} corrected in~\cite{mcnamara00erratum}, the authors use the non-conservative form of~\eqref{pressure-diffusion}, namely
\begin{equation}\label{pressure-diffusion-non-conservative}
    (1-\phi) \big( \partial_t p_f + \u \cdot \nabla p_f \big) + (p_{\mathrm{atm}} + p_f) \div \, \u = p_{\mathrm{atm}} \div ( \kappa(\phi) \nabla p_f ),
\end{equation}
and then use the approximation $p_{\mathrm{atm}} + p_f \approx p_{\mathrm{atm}}$. A similar equation is often used in the incompressible case,~\textit{i.e.} when~$\phi$ is constant and $\div \, \u = 0$. It then reduces to a "classical" convection/diffusion equation, and even to a diffusion equation if the transport term is not taken into account. This is for example the case in~\cite{montserrat12,roche12}.
\end{remark}

As in the case of a general state law, it is possible to estimate the term~$A$ in the energy equation~\eqref{energy3}. More precisely, if the state law is given by~\eqref{ideal-gas}, we obtain~$H$ by using~\eqref{Q->H}, namely
\begin{equation}\label{H(p)}
    H(\rho_f)
    = p_{\mathrm{atm}} \frac{\rho_f}{\rho_f^0} \Big[ \ln \Big(\frac{\rho_f}{\rho_f^0}\Big) - 1 \Big]
    = (p_{\mathrm{atm}} + p_f ) \Big[ \ln \Big( 1+ \frac{p_f}{p_{\mathrm{atm}}} \Big) - 1 \Big],
\end{equation}
so that we get
\begin{equation}\label{eq:energieA_air}
- A = \frac{d}{dt} \Big( \int (1-\phi) (p_{\mathrm{atm}} + p_f ) \Big[ \ln \Big( 1+ \frac{p_f}{p_{\mathrm{atm}}} \Big) - 1 \Big]\Big) + \int \kappa(\phi) |\nabla p_f|^2.
\end{equation}
%%%%%%%%%%%%%%%%%%%%%%%%%%%%%%%%%%%%%%%%%
%%%%%%%%%%%%%%%%%%%%%%%%%%%%%%%%%%%%%%%%%
%%%%%%%%%%%%%%%%%%%%%%%%%%%%%%%%%%%%%%%%%
\section{Generic model for rheology and dilatation}\label{sec:generic}
%%%%%%%%%%%%%%%%%%%%%%%%%%%%%%%%%%%%%%%%%
%%%%%%%%%%%%%%%%%%%%%%%%%%%%%%%%%%%%%%%%%
%%%%%%%%%%%%%%%%%%%%%%%%%%%%%%%%%%%%%%%%%

The rheology of granular media is complex and most classical models are known to be ill-posed when granular flow is assumed to be incompressible (isochoric). More precisely, the Drucker-Prager and $\mu(I)$ rheologies are linearly unstable in some regimes, see~\cite[Appendix C]{martin17} for the viscous Drucker-Prager model and~\cite{barker15} for the~$\mu(I)$-rheology.

In several recent works, see~\cite{barker15,barker17,schaeffer19,barker23}, the authors have shown that taking into account the dilation of the granular medium allows, in the two-dimensional case, to regularise these models and to remedy these instabilities. More precisely, it is shown that the rheology, \textit{i.e.} the expression of the additional stress~$\btau$, and the dilatancy law, \textit{i.e.} the expression of $\div \,\u$, should be defined in a concordant way.

In this section, we will use the same notations as in~\cite{schaeffer19}. We define the deviatoric strain-rate tensor by
\begin{equation}\label{def:S}
\S = \frac{\nabla \u + (\nabla \u)^\mathrm{T}}{2} - \frac{1}{2}(\div \, \u) \, \Id,
\end{equation}
which is a symmetric and trace-less tensor.
We also introduce the inertial number (see~\cite{andreotti12}):
\begin{equation}\label{def:I}
I = \frac{d\, |\S|}{\sqrt{p/\rho_s}}, 
\end{equation}
where $d$ represents the grain diameter of the granular medium under consideration, and where the matrix norm is defined by the second invariant of any symmetric tensor, namely $|\S|^2 = \frac{1}{2}\S:\S = \frac{1}{2} \sum_{i,j}S_{ij}S_{ij}$.

Note that in a recent paper by Barker~\textit{et al}~\cite{barker23}, the authors use the $\mu(J)$-rheology, instead of
$\mu(I)$, to describe fluidized granular flow, \textit{i.e.} granular material immersed in water. The dimensionless number~$J$, defined by
\begin{equation}
    J=\frac{\eta_f |\S|}{p},
\end{equation}
is used for granular flows with a low Stokes number ($\mathrm{St}=\frac{\rho_s d^2}{\eta_f}|\S|$) and is therefore well suited for granular flows in liquids.
In the present study, we focus on granular flows with a relatively high Stokes number, which is the case in air with a viscosity about $50$ times smaller than that of water, while taking into account the effect of the interstitial gas through the pressure equation~\eqref{pressure-diffusion}.

%%%%%%%%%%%%%%%%%%%%%%%%%%%%%%%%%%%%%%%%%
\subsection{Granular rheology}%\label{sec:dilation}
%%%%%%%%%%%%%%%%%%%%%%%%%%%%%%%%%%%%%%%%%

The rheology of a granular flow can then be described in a fairly general way by a relationship of the form
\begin{equation}\label{def:tau}
    \btau = Z(\phi, I) p \frac{\S}{|\S|}.
\end{equation}
To be rigorous, the relation~\eqref{def:tau}, having no sense when~$\S$ vanishes, should be rewritten as
\begin{equation}
\left\{\begin{aligned}
    & \btau : \S = 2\, Z(\phi, I) p |\S|, \\
    & |\btau| \leq Z(\phi, I) p \quad \text{where $\btau$ is symmetric and trace-less.}
\end{aligned}\right.
\end{equation}
This corresponds to the usual threshold rheology in granular media: the value of~$Z(\phi, I)$ is related to a threshold at which the flow starts to deform. One of the fundamental points is therefore to make~$Z(\phi, I)$ explicit. Currently, only empirical measurements allow us to have access to this threshold and we will see in Section~\ref{sec:examples} several examples of such laws obtained by fitting experimental measurements.

\begin{remark}\label{rem-viscous1}
In~\cite{barker15,barker17,barker23,schaeffer19}, the authors write~$Y(\phi, p, I)$ instead of~$Z(\phi, I)p$. The writing proposed here seems to be relevant since in all models investigated later the stress is proportional to the pressure. Moreover, it allows the conditions that will be proposed later to be written relatively simply.\par
It is important to note that the constitutive law~\eqref{def:tau} excludes the case of purely viscous rheologies, which write $\btau = \eta(\phi,|\S|) \S$.
The addition of such a term to~\eqref{def:tau} does not change the announced results and will be discussed regularly in the following, see Remarks~\ref{rem-viscous2} and~\ref{rem-viscous3}. 
\end{remark}

%%%%%%%%%%%%%%%%%%%%%%%%%%%%%%%%%%%%%%%%%
\subsection{Dilatancy model}%\label{sec:dilation}
%%%%%%%%%%%%%%%%%%%%%%%%%%%%%%%%%%%%%%%%%

The quantity $\div\,\u$ expresses the evolution of the elementary volumes of fluid under the action of a flow moving at the velocity~$\u$. 
Imposing a zero divergence condition is therefore equivalent to imposing that locally, elementary volumes do not vary. If we want to take into account the expansion/compression phenomena, we must therefore impose an additional law which specifies how the divergence of the velocity field depends on certain quantities which characterise the flow.
One way of expressing the effects of dilation is to impose a relation of the form\footnote{We will see in Remark~\ref{rem:dilatance-geometry} that this choice can be motivated by introducing a dilatancy angle.}
\begin{equation}\label{def:div}
    \div \, \u = 2 |\S| f(\phi, p, I).
\end{equation}
With the constitutive law~\eqref{def:tau}, the dilatancy law~\eqref{def:div} and the pore gas pressure equation~\eqref{pressure-diffusion} added to the conservation of mass and momentum equations (respectively~\eqref{syst1} and~\eqref{syst3}), we have a complete system whose unknowns are the volume fraction~$\phi$, the granular velocity~$\u$ and pressure~$p$, and the pore gas pressure~$p_f$, namely
\begin{align}
& \partial_t \phi + \div ( \phi \, \u ) = 0, \label{syst1b} \\
& \partial_t ((1-\phi) p_f) + \div ( (1-\phi) p_f \u ) + p_{\mathrm{atm}} \div \, \u = p_{\mathrm{atm}} \div ( \kappa(\phi) \nabla p_f ), \label{syst2b} \\
& \phi \rho_s \big( \partial_t \u + \u \boldsymbol{\cdot} \nabla \u \big) = \phi \rho_s \g  - \nabla p + \div \Big( Z(\phi, I) p \frac{\S}{|\S|} \Big)  - \nabla p_f, \label{syst3b} \\
& \div \, \u = 2 |\S| f(\phi, p, I). \label{syst4b}
\end{align}

Note that this system can also be written in non-conservative form. Indeed, introducing the notation $\mathrm d_t = \partial_t + \u\cdot\nabla$ for the convective derivative, equations~\eqref{syst1b}--\eqref{syst4b} rewrite
\begin{align}
& \mathrm d_t \phi + \phi \, \div \, \u = 0, \label{eq:NS1} \\
& (1-\phi) \mathrm d_t p_f + (p_{\mathrm{atm}} + p_f) \div \, \u = p_{\mathrm{atm}} \div ( \kappa(\phi) \nabla p_f ), \\
& \phi \rho_s \mathrm d_t \u = \phi \rho_s \g  - \nabla p + \div \Big( Z(\phi, I) p \frac{\S}{|\S|} \Big)  - \nabla p_f, \label{eq:NS2} \\
& \div \, \u = 2 |\S| f(\phi, p, I). \label{eq:NS3} 
\end{align}

%%%%%%%%%%%%%%%%%%%%%%%%%%%%%%%%%%%%%%%%%
%%%%%%%%%%%%%%%%%%%%%%%%%%%%%%%%%%%%%%%%%
%%%%%%%%%%%%%%%%%%%%%%%%%%%%%%%%%%%%%%%%%
\section{Main properties of the generic model}\label{sec:main-properties}
%%%%%%%%%%%%%%%%%%%%%%%%%%%%%%%%%%%%%%%%%
%%%%%%%%%%%%%%%%%%%%%%%%%%%%%%%%%%%%%%%%%
%%%%%%%%%%%%%%%%%%%%%%%%%%%%%%%%%%%%%%%%%

%%%%%%%%%%%%%%%%%%%%%%%%%%%%%%%%%%%%%%%%%
\subsection{Energy dissipation}\label{ssec:energy}
%%%%%%%%%%%%%%%%%%%%%%%%%%%%%%%%%%%%%%%%%

As announced in the Section~\ref{sec:solid-gaz-model} (see equation~\eqref{energy3}), the quantity $\displaystyle \mathcal E_0(\phi,\u) = \frac{1}{2}\int \phi \rho_s |\u|^2$ satisfies the following equation 
\begin{equation}
    \frac{d}{dt} \big[ \mathcal E_0(\phi,\u) \big] = \int \phi \rho_s \g \cdot \u + A - B + C.
\end{equation}
Moreover, we have seen in the previous section (see equation~\eqref{eq:energieA}), that the term~$A,$ due to the presence of the interstitial gas, satisfies
\begin{equation}\label{eq:energieAbis}
-A = \frac{d}{dt} \big[ \mathcal E_1(\phi,p_f) \big] + \int \kappa(\phi) |\nabla p_f|^2,
\end{equation}
where $\displaystyle \mathcal E_1(\phi,p_f) = \int (1-\phi)H(\rho_f)$, so that the energy equation~\eqref{energy3} rewrites
\begin{equation}
    \frac{d}{dt} \big[ \mathcal E_0(\phi,\u) \big]+\frac{d}{dt} \big[ \mathcal E_1(\phi,p_f) \big] + \int \kappa(\phi) |\nabla p_f|^2 + B - C = \int \phi \rho_s \g \cdot \u.
\end{equation}
Let us now examine the effects of the rheology and the dilatancy law, specified by~\eqref{def:tau} and~\eqref{def:div} respectively, on the energy of the system~\eqref{syst1b}-\eqref{syst4b}.  Replacing the deviatoric stress tensor $\btau$  and the divergence of the velocity field by the relations (21) and (22) in the terms $B$ and $C$ appearing in the energy equation~\eqref{energy3}, we obtain
\begin{equation}
    B-C = \int \btau : \nabla \u - p\, \div \, \u = 2\int (Z-f)p|\S|.
\end{equation}
Reporting this relation in~\eqref{energy3}, we deduce that the total energy defined by $\mathcal E = \mathcal E_0(\phi,\u) + \mathcal E_1(\phi,p_f)$ satisfies the following equation
\begin{equation}
    \frac{d\mathcal E}{dt} + \mathcal D = \int \phi \rho_s \g \cdot \u,
\end{equation}
where
\begin{equation}\label{dissipation-general-exp}
\mathcal D = \int \kappa(\phi) |\nabla p_f|^2 + 2\int (Z(\phi, I)-f(\phi, p, I)) p |\S|
\end{equation}
is a dissipation rate. Indeed, if $Z(\phi, I)-f(\phi, p, I)\ge 0$, the total energy of the fluidised granular model decreases over time in the absence of any external force, such as gravitational force.
The positivity of the dissipation rate, which is expressed by the condition
\begin{equation}\label{dissipation-general}
\text{Dissipation condition:} \quad Z \geq f,
\end{equation}
is one of the main properties of the fluidised granular models proposed in the following, and can now be stated.

\begin{theorem}\label{th1}
Under the dissipation condition~\eqref{dissipation-general} and without external force, the model~\eqref{syst1b}--\eqref{syst4b} has a decreasing energy.
\end{theorem}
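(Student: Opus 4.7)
The plan is to collect the energy identities already derived in Sections~\ref{sec:solid-gaz-model} and~\ref{sec:fluidisation}, substitute the constitutive laws~\eqref{def:tau} and~\eqref{def:div}, and then read off the conclusion from the hypothesis~\eqref{dissipation-general}. The argument is essentially bookkeeping, since the delicate work has been done upstream.

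First, I would reproduce the steps~\eqref{energy1}--\eqref{energy3}: take the scalar product of~\eqref{syst3b} with~$\u$, add $\tfrac{1}{2}\rho_s|\u|^2$ times~\eqref{syst1b}, and integrate, the boundary contributions vanishing because $\u\cdot\mathbf n = 0$ on~$\partial\Omega$. This yields
\[
\frac{d}{dt}\mathcal E_0(\phi,\u) = \int \phi\rho_s\,\g\cdot\u + A - B + C,
\]
with $A=\int p_f\,\div\,\u$, $B=\int\btau:\nabla\u$ and $C=\int p\,\div\,\u$. Next, I would absorb the fluidisation term $A$ using the pore-gas machinery of Section~\ref{ssec:gaz-general}: invoking~\eqref{eq:energieA_air} in the ideal-gas case~\eqref{ideal-gas} (or~\eqref{eq:energieA} for a general state law) gives
\[
-A = \frac{d}{dt}\mathcal E_1(\phi,p_f) + \int \kappa(\phi)|\nabla p_f|^2,
\]
the boundary terms again vanishing via Darcy's law, as in the remark following~\eqref{eq:energieA}.

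Then I would plug in the constitutive laws. Because $\S$ is symmetric and trace-free, one has $\S:\nabla\u = \S:\S = 2|\S|^2$, so~\eqref{def:tau} produces $B = 2\int Z(\phi,I)\,p\,|\S|$, while~\eqref{def:div} gives $C = 2\int f(\phi,p,I)\,p\,|\S|$. Summing the two identities above then yields
\[
\frac{d\mathcal E}{dt} + \int \kappa(\phi)|\nabla p_f|^2 + 2\!\int (Z-f)\,p\,|\S| = \int \phi\rho_s\,\g\cdot\u,
\]
with $\mathcal E = \mathcal E_0 + \mathcal E_1$. Dropping the gravity term by the ``no external force'' hypothesis, the diffusive contribution is non-negative since $\kappa(\phi)\geq 0$ by~\eqref{beta}, and the threshold contribution is non-negative by~\eqref{dissipation-general} together with $p\ge 0$ and $|\S|\ge 0$; hence $d\mathcal E/dt\leq 0$.

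I do not expect any genuine obstacle: the clever step---choosing~$H$ in~\eqref{Q->H} so that $A$ becomes a total time-derivative modulo a non-negative diffusive remainder---has already been performed in Section~\ref{sec:fluidisation}. The only minor verifications are the tensorial identity $\btau:\nabla\u = 2Zp|\S|$, immediate from the symmetry and trace-freeness of~$\S$, and the implicit positivity of the solid pressure~$p$, which is a standing assumption of the rheological framework.
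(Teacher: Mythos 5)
Your proposal is correct and follows essentially the same route as the paper: the identity $B-C=2\int(Z-f)p\,|\S|$ obtained from the symmetry and trace-freeness of $\S$, combined with the absorption of $A$ via~\eqref{eq:energieA}, is exactly the paper's argument leading to the dissipation rate~\eqref{dissipation-general-exp}. Your explicit remark that positivity of $p$ is implicitly assumed (it is already needed for the inertial number~\eqref{def:I} to make sense) is a fair observation but does not change the argument.
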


\begin{remark}\label{rem-viscous2}
The addition of a viscous term, as suggested in Remark~\ref{rem-viscous1}, does not affect this result. On the contrary, adding a viscous contribution $\eta(\phi,|\S|)\S$ to the stress induces the term
\begin{equation}
\mathcal D_{\mathrm{dis}} = 2\int \eta(\phi, |\S|)|\S|^2 >0.
\end{equation}
in the left-hand side of the energy equation. In this case, the dissipation rate is enhanced.
\end{remark}

%%%%%%%%%%%%%%%%%%%%%%%%%%%%%%%%%%%%%%%%%
\subsection{Consistency of the dilatation law at equilibrium}\label{ssec:equilibrium}
%%%%%%%%%%%%%%%%%%%%%%%%%%%%%%%%%%%%%%%%%

According to the arguments of Pailha and Pouliquen in~\cite[section 3.3]{pailha09} and those of Roux and Radjaï in~\cite{roux98}, 
the rate of volume change, which is proportional to the function~$f$, is related to a deviation from an equilibrium steady-state of the granular medium, which is isochoric, \textit{i.e.} with constant volume.
It has been experimentally observed that at this equilibrium, the volume fraction~$\phi$ depends linearly on the inertial number~$I$ and that an equilibrium relation of the following form
\begin{equation}\label{phi(I)}
    \phi_{\mathrm{eq}}(I) = \phi_{\mathrm{max}} - \Delta\phi \, I
\end{equation}
can be found. The parameters $\phi_{\mathrm{max}}$ and~$\Delta\phi$ are obtained by fitting~\eqref{phi(I)} with experimental measurements (e.g. $\phi_{\mathrm{max}}=0.6$ and $\Delta\phi=0.2$ in~\cite{forterre08}).
Note that DEM (Discrete Element Methods) simulations confirm this result (see~\cite{schaeffer19} for example).
The relation~\eqref{phi(I)} can be used to determine the volume fraction at equilibrium by setting $\phi=\phi_{\mathrm{eq}}(I)$. From this empirical law and the definition of the inertial number~\eqref{def:I}, a state equation for the granular flow can easily be derived (see for instance~\cite{heyman_2017}), which expresses the pressure as
\begin{equation}
    p=\rho_s\left(\frac{d\Delta\phi |\S|}{\phi_{\mathrm{max}}-\phi}\right)^2.
\end{equation}
It is therefore attractive to supplement the conservation equations of mass and momentum with the $\mu(I)$--rheology and the above state law, so that a closed compressible model is obtained. However, such system  based on the $\mu(I),\, \phi(I)$--rheology is always ill-posed in the two-dimensional case as proved by Heyman \textit{et al} in~\cite{heyman_2017} (see also~\cite{schaeffer19}).
Barker~\textit{et al} in~\cite{barker17} followed by Schaeffer~\textit{et al} in~\cite{schaeffer19} developed another approach: a dilatancy function $f$ specifies the rate of volume change through the equation~\eqref{syst4b}. 

Although the empirical law~\eqref{phi(I)} is not prescribed in the fluidised granular model~\eqref{syst1b}--\eqref{syst4b}, nor in the compressible models of Barker~\textit{et al}~\cite{barker17} and Schaeffer~\textit{et al}~\cite{schaeffer19}, it must be used to impose conditions on the function $f$ so that the dilatancy law~\eqref{syst4b} is consistent with the physics of dense granular medium. This is what we aim to do in what follows by introducing equilibrium conditions on $f$.

\begin{remark}
There exists other formulations approaching this relationship between~$I$ and $\phi_{\mathrm{eq}}$. For instance, in~\cite[p.929]{schaeffer19}, the authors suggest
$$\phi_{\mathrm{eq}}^{\mathrm{sch}}(I) = \phi_{\mathrm{max}} - \frac{\Delta\phi}{1+ 1/I},$$
arguing that this law would prevent $\phi$ from becoming negative for large values of~$I$.
In practice, we will show in Subsection~\ref{ssec:phi-bounds} that, independently of this law, the volume fraction solution of the mass conservation equation always remains positive, which makes the more complex law proposed in~\cite{schaeffer19} useless.

Nevertheless, as this law of equilibrium is empirical, experiments make it possible to refine the relationship and improve its accuracy in some cases. Hence, in~\cite[p.3]{robinson23}, the authors propose
\begin{equation}
    \phi_{\mathrm{eq}}^{\mathrm{rob}}(I) = \phi_{\mathrm{max}} - A\, I^a,
\end{equation}
with $A = 0.1305$ and $a = 0.8156$, whereas in~\cite[p.10]{breard22}, the following law
\begin{equation}
    \phi_{\mathrm{eq}}^{\mathrm{bre}}(I) = \frac{\phi_{\mathrm{max}}}{1+I}.
\end{equation}
is suggested. However, in the present article, we will always use the relation~\eqref{phi(I)} but any other reasonable choice could be considered.
\end{remark}

In~\cite{roux98}, Roux and Radjaï proposed a dilatancy model inspired by critical state mechanics that relates the rate of volume change to the deviation from equilibrium, \textit{i.e.}
\begin{equation}\label{roux_radjai}
\div \, \u=2a|\S|(\phi-\phi_{\mathrm{eq}}(I))
\end{equation}
where $a$ is a constant. This relation expresses how two layers of beads confined at a given pressure should expand when subjected to a constant shear rate. Written in the form~\eqref{roux_radjai}, it is clear, that when a deformation occurs (\textit{i.e} $|\S|>0$), if the volume fraction exceeds the equilibrium fraction, then $\div \, \u$ is positive, so the material dilates. On the other hand, if $\phi<\phi_{\mathrm{eq}}(I)$, we have $\div \, \u<0$ which induces a contraction of the granular medium. The granular flow is isochoric ($\div \, \u=0$) when there is no deformation (\textit{i.e} $|\S|=0$).
This behavior is essential and must be reproduced by the generic dilatancy law~\eqref{syst4b}.
Note that the Roux and Radjaï's model~\eqref{roux_radjai} fits in the general framework proposed by Barker \textit{et al}~\cite{barker17} and used here, with the function~$f$ defined as: $f(\phi,p,|\S|)=a(\phi-\phi_{\mathrm{eq}}(I))$.

The empirical law~\eqref{phi(I)} can be equivalently written
\begin{equation}\label{I(phi)}
    I_{\mathrm{eq}}(\phi) =  \frac{\phi_{\mathrm{max}}-\phi}{\Delta\phi},
\end{equation}
which defines the inertial number at the equilibrium by $I=I_{\mathrm{eq}}(\phi).$  
As we will see in Section~\ref{sec:examples}, it is preferable, when we place ourselves in the framework of the linear stability theory developed by Barker et al in~\cite{barker17}, to work with this alternative form and to define the equilibrium conditions in terms of deviation of the inertial number $I$ from the equilibrium state characterized by $I_{\mathrm{eq}}(\phi)$.
We now enonce those conditions which ensure that the dilatancy function $f$ is consistent with the Roux and Radjaï's model~\cite{roux98}. 
\begin{theorem}\label{th2}
Under the following equilibrium conditions
\begin{equation}\label{equilibrium-condition}
\text{Equilibrium conditions:}\quad  
\left\{ \begin{aligned}
    & f(\phi,p,I_{\mathrm{eq}}(\phi))=0,\\
    & f(\phi,p,I)>0 \quad \text{for $I>I_{\mathrm{eq}}(\phi)$}, \\
    & f(\phi,p,I)<0 \quad \text{for $I<I_{\mathrm{eq}}(\phi)$},
\end{aligned} \right.
\end{equation}
the model~\eqref{syst1b}--\eqref{syst4b} is consistent with the physics described by Roux and Radjaï in~\cite{roux98}.
\end{theorem}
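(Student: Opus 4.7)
The plan is to verify that the three qualitative behaviours of the Roux--Radjaï dilatancy model~\eqref{roux_radjai} --- namely (i) a locally isochoric flow at equilibrium, (ii) dilation when the volume fraction exceeds the equilibrium value under shear, and (iii) contraction when it lies below this value under shear --- are all reproduced by the generic law~\eqref{syst4b} as soon as the hypothesis~\eqref{equilibrium-condition} holds.

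The first step is a simple translation between the "$I$-formulation" used in~\eqref{equilibrium-condition} and the "$\phi$-formulation" used by Roux and Radjaï in~\eqref{roux_radjai}. Since the empirical law~\eqref{phi(I)} is affine and strictly decreasing, $I_{\mathrm{eq}}(\phi) = (\phi_{\mathrm{max}}-\phi)/\Delta\phi$ is the inverse of $\phi_{\mathrm{eq}}(I) = \phi_{\mathrm{max}} - \Delta\phi\, I$. Hence the three conditions $I < I_{\mathrm{eq}}(\phi)$, $I = I_{\mathrm{eq}}(\phi)$, $I > I_{\mathrm{eq}}(\phi)$ are respectively equivalent to $\phi < \phi_{\mathrm{eq}}(I)$, $\phi = \phi_{\mathrm{eq}}(I)$, $\phi > \phi_{\mathrm{eq}}(I)$. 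This elementary equivalence is the only place where the specific form of $\phi_{\mathrm{eq}}$ enters; any strictly monotone equilibrium curve would do.

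The second step is a sign analysis on the right-hand side of~\eqref{syst4b}. When $|\S| = 0$ the equation immediately gives $\div\,\u = 0$, so the flow is locally isochoric irrespective of $f$; this covers the rest state. When $|\S| > 0$ and $\phi = \phi_{\mathrm{eq}}(I)$, equivalently $I = I_{\mathrm{eq}}(\phi)$, the first line of~\eqref{equilibrium-condition} yields $f(\phi,p,I) = 0$, hence $\div\,\u = 0$: the granular medium remains at constant volume at the equilibrium state. When $|\S| > 0$ and $\phi > \phi_{\mathrm{eq}}(I)$, equivalently $I > I_{\mathrm{eq}}(\phi)$, the second line of~\eqref{equilibrium-condition} gives $f > 0$, hence $\div\,\u > 0$, i.e. the material dilates. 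Symmetrically, when $|\S| > 0$ and $\phi < \phi_{\mathrm{eq}}(I)$, the third line gives $f < 0$, hence $\div\,\u < 0$, i.e. the material compacts.

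These are precisely the three qualitative regimes of~\eqref{roux_radjai}, where the sign of $\div\,\u$ is controlled by $\mathrm{sign}(\phi - \phi_{\mathrm{eq}}(I))$ (multiplied by $|\S|$) --- so consistency with Roux and Radjaï's model is established. There is no real obstacle in the argument: the whole proof is a case distinction on the signs, and the single non-trivial ingredient is the equivalence between the deviations $I - I_{\mathrm{eq}}(\phi)$ and $\phi - \phi_{\mathrm{eq}}(I)$. I would emphasise that the "$I$-formulation"~\eqref{equilibrium-condition} is preferred here because it is the natural one to plug into the linear stability analysis of Section~\ref{sec:examples}, while remaining fully equivalent at the physical level to the critical-state formulation of~\cite{roux98}.
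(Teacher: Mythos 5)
Your proof is correct and follows essentially the same route as the paper: a sign analysis of $f$ in the dilatancy law~\eqref{syst4b}, combined with the observation that $I - I_{\mathrm{eq}}(\phi)$ and $\phi - \phi_{\mathrm{eq}}(I)$ have the same sign because the equilibrium relation~\eqref{phi(I)} is affine and decreasing. Your explicit treatment of the $|\S|=0$ case is a slight (and welcome) addition, but the argument is otherwise identical to the discussion following Theorem~\ref{th2} in the paper.
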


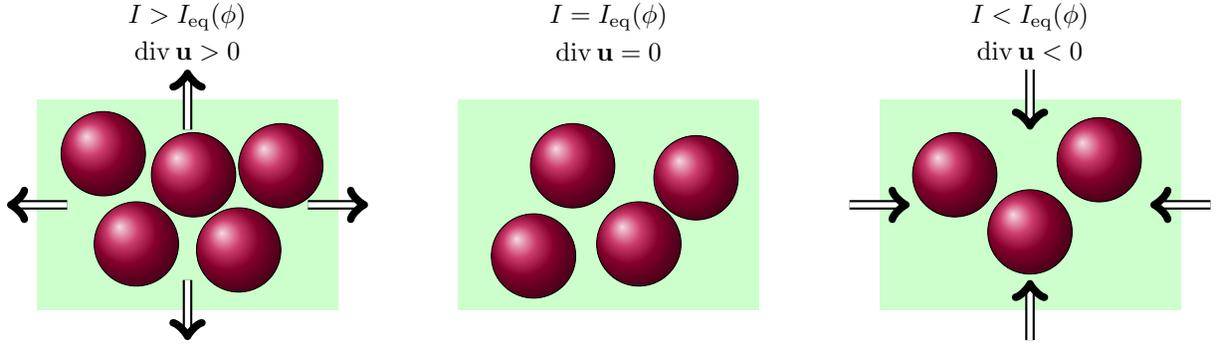
\begin{figure}
\begin{center}
\begin{tikzpicture}[scale=0.4]
\fill  [color=green!20](0,1)--(10,1)--(10,8)--(0,8)--(0,1)--cycle;
\draw  [black, ball color=purple] (2.2,6.2) circle (1.4);%1
\draw  [black, ball color=purple] (3.3,3.2) circle (1.4);%2
\draw  [black, ball color=purple] (5.2,5.5) circle (1.4);%3
\draw  [black, ball color=purple] (6.7,3.0) circle (1.4);%2
\draw  [black, ball color=purple] (8.1,5.8) circle (1.4);%3
\draw  [->, double distance = 2pt, thick] (5,7) -- (5,9);
\draw  [->, double distance = 2pt, thick] (5,2) -- (5,0);
\draw  [->, double distance = 2pt, thick] (1,4.5) -- (-1,4.5);
\draw  [->, double distance = 2pt, thick] (9,4.5) -- (11,4.5);
\draw  (5,9) node[above] {$\div \, \u > 0$};
\draw  (5,10) node[above] {$I>I_{\mathrm{eq}}(\phi)$};%$\phi>\phi_{\mathrm{eq}}(I)$};
\fill  [color=green!20](14,1)--(24,1)--(24,8)--(14,8)--(14,1)--cycle;
\draw  [black, ball color=purple] (16.5,2.8) circle (1.4);
\draw  [black, ball color=purple] (17.8,5.8) circle (1.4);
\draw  [black, ball color=purple] (20,3.2) circle (1.4);
\draw  [black, ball color=purple] (21.9,5.4) circle (1.4);
\draw  (19,9) node[above] {$\div \, \u = 0$};
\draw  (19,10) node[above] {$I=I_{\mathrm{eq}}(\phi)$};%$\phi=\phi_{\mathrm{eq}}(I)$};
\fill  [color=green!20](28,1)--(38,1)--(38,8)--(28,8)--(28,1)--cycle;
\draw  [black, ball color=purple] (30.5,5.5) circle (1.4);
\draw  [black, ball color=purple] (33.0,3.6) circle (1.4);
\draw  [black, ball color=purple] (35.3,6.0) circle (1.4);
\draw  [<-, double distance = 2pt, thick] (33,7) -- (33,9);
\draw  [<-, double distance = 2pt, thick] (33,2) -- (33,0);
\draw  [<-, double distance = 2pt, thick] (29,4.5) -- (27,4.5);
\draw  [<-, double distance = 2pt, thick] (37,4.5) -- (39,4.5);
\draw  (33,9) node[above] {$\div \, \u < 0$};
\draw  (33,10) node[above] {$I<I_{\mathrm{eq}}(\phi)$};%$\phi<\phi_{\mathrm{eq}}(I)$};
\end{tikzpicture}
\end{center}
\caption{Change in volume of a granular medium near an isochoric equilibrium state characterized by~$I=I_{\mathrm{eq}}(\phi),$ or equivalently $\phi=\phi_{\mathrm{eq}}(I)$. }
\label{fig:equilibrium}
\end{figure}

The conditions~\eqref{equilibrium-condition} describe the same behaviour of the granular material, when it deviates from the isochoric equilibrium position, as the Roux and Radjaï dilatancy model. Indeed, at equilibrium we have $I=I_{\mathrm{eq}}(\phi)$ so that to the divergence of the velocity field equals zero. 
If $I>I_{\mathrm{eq}}(\phi)$ then the rate of volume change is positive, resulting in an expansion of the granular flow. On the other hand, when the inertial number is less than the equilibrium value, the divergence of the velocity field is negative, resulting in compression of the material. Figure~\ref{fig:equilibrium} illustrates this behaviour.
Note that the non-isochoric and fluidized granular models derived in Section~\ref{sec:examples} will all satisfy the equilibrium conditions~\eqref{equilibrium-condition}.

%%%%%%%%%%%%%%%%%%%%%%%%%%%%%%%%%%%%%%%%%
\subsection{Linear stability of the model}\label{ssec:stability}
%%%%%%%%%%%%%%%%%%%%%%%%%%%%%%%%%%%%%%%%%

According to the work of Barker \textit{et al}~\cite{barker17}, we know that the granular model~\eqref{syst1b}, \eqref{syst3b} with $p_f=0$ and~\eqref{syst4b} is linearly stable as soon as the functions~$Z$ and~$f$ satisfy the three following properties
\begin{align}
    & \text{First stability condition:}  \hspace{-3.3cm}
    && Z - \frac{I}{2}\partial_I Z = f+ I\partial_I f. \label{C1}\\
    & \text{Second stability condition:}  \hspace{-3.3cm}
    && Z + I\partial_I Z \geq 0. \label{C2}\\
    & \text{Third stability condition:}  \hspace{-3.3cm}
    && \partial_p f - \frac{I}{2p}\partial_I f < 0. \label{C3}
\end{align}

The purpose of this section is to show that accounting for the presence of the pore gas in the granular medium as achieved in Section~\ref{sec:fluidisation} by supplementing the mass and momentum conservation equations of the solid particles with the equation~\eqref{syst2b} and adding the term $-\nabla p_f$ in the right-hand side of~\eqref{syst3b}, does not change the stability result stated in~\cite{barker17}, as shown below.

\begin{theorem}\label{th3}
Under the conditions~\eqref{C1}, \eqref{C2} and~\eqref{C3},  the model~\eqref{syst1b}--\eqref{syst4b} is linearly stable.
\end{theorem}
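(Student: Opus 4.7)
The plan is to reduce the stability analysis of the fluidised system~\eqref{syst1b}--\eqref{syst4b} to the stability result already obtained by Barker \textit{et al.}~\cite{barker17} for the non-fluidised compressible model (the same system with $p_f\equiv 0$). Since the only modifications are the extra evolution equation~\eqref{syst2b} for $p_f$ and the pore-pressure force $-\nabla p_f$ in~\eqref{syst3b}, the task is to verify that the pore-gas coupling neither creates new unstable modes nor destabilises the modes already treated in~\cite{barker17}.

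First I would linearise~\eqref{syst1b}--\eqref{syst4b} around a uniform base state $(\phi_0,\u_0,p_0,p_{f,0})$ carrying a simple shear, in the same class of base flows used in~\cite{barker17,schaeffer19}, and insert a plane-wave ansatz $e^{i\mathbf{k}\cdot\mathbf{x}+\sigma t}$ for the perturbations $(\tilde\phi,\tilde\u,\tilde p,\tilde p_f)$. The linearisation of~\eqref{syst2b} then yields, in Fourier space,
\[
\bigl((1-\phi_0)\sigma + p_{\mathrm{atm}}\kappa(\phi_0)|\mathbf{k}|^2\bigr)\,\tilde p_f = -i\, p_{\mathrm{atm}}\,\mathbf{k}\cdot\tilde\u + \text{(terms linear in $\tilde\phi$ and $\tilde p$)}.
\]
The crucial observation is the diffusive contribution $p_{\mathrm{atm}}\kappa(\phi_0)|\mathbf{k}|^2$: it both guarantees that the extra mode associated with the pore-pressure variable has $\mathrm{Re}(\sigma)<0$ at large $|\mathbf{k}|$, and makes the ratio $\tilde p_f/(\mathbf{k}\cdot\tilde\u)$ bounded uniformly in~$|\mathbf{k}|$.

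Next I would eliminate $\tilde p_f$ in the linearised momentum equation by substitution. The contribution of $-i\mathbf{k}\,\tilde p_f$ becomes a symmetric longitudinal correction proportional to $\mathbf{k}\otimes\mathbf{k}/|\mathbf{k}|^2$, i.e.\ a bounded modification of the isotropic part of the stress. Since the stability conditions~\eqref{C1}--\eqref{C3} concern the interaction between the deviatoric response $Z$ and the compressible/dilatant response $f$, and are insensitive to such an additive isotropic term, the reduced dispersion relation obtained after elimination has the same leading asymptotic structure as in~\cite{barker17}. Applying their theorem then yields the announced linear stability.

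The main technical obstacle I expect is ensuring the uniformity of the bounds as $|\mathbf{k}|\to\infty$ over \emph{all} directions of $\mathbf{k}$ (not only those aligned with the shear), and checking that the bounded perturbation of the Barker--Schaeffer dispersion polynomial does not move an existing root across the imaginary axis. This I would address by a continuity/perturbation argument: under~\eqref{C1}--\eqref{C3} the Barker \textit{et al.} polynomial has roots with real parts bounded from above by some $\sigma^\star<\infty$ uniformly in~$|\mathbf{k}|$, and a sufficiently small bounded perturbation preserves this property. The extra mode introduced by $\tilde p_f$ is treated separately and, thanks to the diffusive term in~\eqref{syst2b}, is manifestly stable.
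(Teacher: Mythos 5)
Your proposal is correct in substance and reaches the paper's conclusion, but by a genuinely different reduction. The paper linearises, freezes coefficients, and observes that (after discarding the subleading source term $p_{\mathrm{atm}}\,\div\,\u$, which is $O(|\xi|)$ against the $O(|\xi|^2)$ diffusion) the eigenvalue problem $\lambda\widehat{\V}=\mathcal M\widehat{\V}$ has a block upper-triangular matrix: the last row is $(0\;0\;0\;\,c|\xi|^2)$ and the coupling $\mathrm i\xi$ from $-\nabla p_f$ sits entirely in the off-diagonal block. Hence the spectrum of $\mathcal M$ is exactly the spectrum of the non-fluidised matrix $\mathcal N$ (handled by~\cite{barker17} under~\eqref{C1}--\eqref{C3}) together with the manifestly stable diffusive eigenvalue $c|\xi|^2$, and no perturbation argument is needed. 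You instead retain the $\mathbf{k}\cdot\tilde\u$ source in the linearised~\eqref{syst2b}, eliminate $\tilde p_f$ by a Schur-complement substitution, and control the resulting correction to the dispersion relation. That route works, but two points need tightening. First, the correction $-\mathrm i\mathbf{k}\,\tilde p_f\propto \mathbf{k}(\mathbf{k}\cdot\tilde\u)/\bigl((1-\phi_0)\sigma+p_{\mathrm{atm}}\kappa(\phi_0)|\mathbf{k}|^2\bigr)$ is a rank-one \emph{longitudinal} (anisotropic) term, not an isotropic one, and the conditions~\eqref{C1}--\eqref{C3} do constrain precisely the compressive response; the correct justification is not that the conditions are ``insensitive to an isotropic term'' but that this term is $O(1)$ in $|\mathbf{k}|$ while the terms governing the high-wavenumber asymptotics are $O(|\mathbf{k}|^2)$, so it cannot alter the leading-order growth rates on which Hadamard well-posedness depends. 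Second, your continuity argument should be phrased after rescaling the dispersion relation by $|\mathbf{k}|^2$, so that the pore-pressure contribution is an $o(1)$ perturbation of the Barker--Schaeffer symbol; an $O(1)$ shift of a root is harmless for well-posedness but is not literally ``small''. The payoff of your approach is that it avoids discarding the $p_{\mathrm{atm}}\,\div\,\u$ coupling and so quantifies its effect; the payoff of the paper's approach is that the spectral splitting is exact and immediate.
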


\begin{proof}
To prove this result, we adopt the ideas of the proof made in~\cite[Section 3]{barker17} or in~\cite[section 3]{barker23}. We must estimate the effects of the additional equation describing the evolution of the pore gas pressure, namely~\eqref{syst2b}, and its coupling with the momentum conservation equation through the gradient term~$-\nabla p_f$ in the right-hand side of~\eqref{syst3b}.

Let us consider a solution~$\V^0 = (\phi^0,\u^0,p^0,p_f^0)$ of the system of equations~\eqref{syst1b}--\eqref{syst4b}.
The first step is to linearize this system around $\V^0$ by looking for a perturbed solution in the form $\V = \V^0 + \tilde{\V}$.
As in~\cite{barker17}, we retain only the terms that are linear in the perturbation~$\widetilde{\V}$ and neglect most of the terms that are not of maximal order.
As an example, the linearized version of equation~\eqref{syst2b} describing the evolution of~$p_f$ writes
\begin{equation}\label{pressure-linear}
    \partial_t \widetilde{p_f} = - c \Delta \widetilde{p_f} \qquad \text{where} \;\; c=p_{\mathrm{atm}}\frac{\kappa(\phi^0)}{1-\phi^0}.
\end{equation}
In the next step, the coefficients in the resulting linear system are frozen and we look for exponential solutions $\widetilde{\V}(t,x) = \mathrm{e}^{\mathrm{i}\xi \cdot x + \lambda t}\widehat{\V}$, in order to obtain an eigenvalue problem that can be written as $\lambda \widehat{\V} = \mathcal M\widehat{\V}$.

By specifying the unknown $p_f$, \textit{i.e.}   by decomposing $\widehat{\V} = \begin{pmatrix} \widehat{\U} , \widehat{p}_f \end{pmatrix}$, the matrix~$\mathcal M$ takes the following form
\begin{equation}
    \mathcal M = \begin{pmatrix}
  \phantom{\Huge{|}} \mathcal N
  & \hspace*{-\arraycolsep}\vline\hspace*{-\arraycolsep} & \begin{aligned} 0 \\ \mathrm i \xi \\ 0 \end{aligned} \\
\hline
  \phantom{\Big(} 0 \quad 0 \quad 0 & \hspace*{-\arraycolsep}\vline\hspace*{-\arraycolsep} &
  c|\xi|^2
\end{pmatrix},
\end{equation}
where $\mathcal{N}$ exactly corresponds to the matrix obtained when the fluidisation by the pore gas pressure is not modeled. The term~$\mathrm i \xi$ comes from the pressure gradient~$\nabla p_f$ involved in the momentum equation~\eqref{syst3b} whereas the last line of the matrix~$\mathrm{M}$ corresponds to the equation~\eqref{pressure-linear}.

In~\cite{barker17}, it is proved that the conditions~\eqref{C1}, \eqref{C2} and~\eqref{C3} imply that all eigenvalues of the matrix~$\mathcal{N}$ are positive. Since $c|\xi|^2\geq 0$, these conditions are also sufficient in the present case, \textit{i.e.} with the additional pressure~$p_f$.
%\hfill $\blacksquare$
\end{proof}

\begin{remark}
The second condition~\eqref{C2} is not exactly the same as the one given in~\cite{barker17}, which is stronger since it requires a strict inequality $\partial_I Z > 0$.
Looking in details at the proof in~\cite{barker17}, it is clear that the result remains valid if $I\partial_I Z + Z \geq 0$.
Indeed, this condition is used to show that the trace of a matrix, namely 
$$\mathrm{Tr}(\mathrm{M}+\mathrm{N}) = \Big( \frac{I\partial_I Y}{2\|D^\star\|} +\frac{Y}{2\|D^\star\|} \Big) |\xi|^2 + \frac{1}{\Gamma} \Big( (1+B)^2 \xi_1^2 + (1-B)^2 \xi_2^2 \Big),$$
is strictly positive. Note the above equation is written with the notations used in~\cite[Proof of lemma 4.1]{barker17}.
Since $\Gamma>0$, according to~\eqref{C3}, the second term in the right-hand side of the above equality is always strictly positive. Therefore,
the condition $I\partial_IY+Y\geq 0$ is sufficient to ensure the positiveness of the trace.
Note that the conditions for the linear stability of the model established in~\cite{barker17} are only sufficient conditions. There is no evidence that they are optimal.
\end{remark}

%%%%%%%%%%%%%%%%%%%%%%%%%%%%%%%%%%%%%%%%%
\subsection{Volume fraction bounds}\label{ssec:phi-bounds}
%%%%%%%%%%%%%%%%%%%%%%%%%%%%%%%%%%%%%%%%%

In order for the model to make sense, it is fundamental to ensure that the volume fraction~$\phi$ always remains positive, and does not exceed the maximum value~$\phi_{\mathrm{max}}$ previously introduced. We are now stating one of the main results of this paper.
\begin{theorem}\label{th4}
Suppose that the initial condition satisfies $0 \leq \phi|_{t=0} \leq \phi_{\mathrm{max}}$ and consider any smooth solution to~\eqref{syst1b}--\eqref{syst4b}.
We have
$$\phi\geq 0.$$
Moreover if the consistency conditions~\eqref{equilibrium-condition} are satisfied then we have
$$\phi \leq \phi_{\mathrm{max}}.$$
\end{theorem}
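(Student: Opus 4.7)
The natural approach is to start from the mass conservation equation written in non-conservative form, namely \eqref{eq:NS1}, and substitute the dilatancy law \eqref{eq:NS3} for $\div\,\u$. This yields the transport ODE
\begin{equation*}
\mathrm{d}_t \phi = -\phi \, \div\,\u = -2\,\phi\, |\S|\, f(\phi,p,I),
\end{equation*}
which, along any characteristic curve $X(\cdot)$ solving $\dot{X}(t) = \u(t,X(t))$, reduces to a scalar ODE for the function $t \mapsto \phi(t,X(t))$ with multiplicative right-hand side. This ``decoupled'' form is the key: the sign and the crossing of thresholds by $\phi$ can both be read off from the corresponding ODE without having to untangle the full coupled system.

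For the lower bound $\phi \geq 0$, I would simply integrate this ODE along the characteristic, which gives
\begin{equation*}
\phi(t,X(t)) = \phi(0,X(0)) \, \exp\!\left(-\int_0^t 2\,|\S|\,f(\phi,p,I)\,\mathrm{d}s\right).
\end{equation*}
The exponential factor is strictly positive (independently of the sign of $f$), so the sign of $\phi$ is preserved along every characteristic. The hypothesis $\phi|_{t=0}\geq 0$ then immediately transfers to $\phi(t,\cdot)\geq 0$ for all $t$.

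For the upper bound $\phi \leq \phi_{\max}$, the idea is to use the constant function $\phi_{\max}$ as a barrier and invoke a maximum-principle/comparison argument on the characteristic ODE. At any point where $\phi = \phi_{\max}$, the empirical relation \eqref{I(phi)} gives $I_{\mathrm{eq}}(\phi_{\max}) = 0$, and since $I = d|\S|/\sqrt{p/\rho_s} \geq 0$ always holds, we get $I \geq I_{\mathrm{eq}}(\phi_{\max})$. The equilibrium conditions \eqref{equilibrium-condition} then force $f(\phi_{\max},p,I)\geq 0$ (strictly when $I>0$, while $f$ is irrelevant when $I=0$ because then $|\S|=0$). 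Consequently
\begin{equation*}
\left.\frac{d}{dt}\phi(t,X(t))\right|_{\phi=\phi_{\max}} = -2\,\phi_{\max}\,|\S|\,f(\phi_{\max},p,I) \leq 0,
\end{equation*}
so no characteristic can exit the set $\{\phi \leq \phi_{\max}\}$. Starting from $\phi|_{t=0}\leq \phi_{\max}$, this yields $\phi \leq \phi_{\max}$ at all times.

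The main technical obstacle is making the barrier argument genuinely rigorous, because the equilibrium conditions only provide a non-strict inequality $f\geq 0$ at $I = I_{\mathrm{eq}}(\phi)$ and the right-hand side of the ODE vanishes together with $|\S|$; a characteristic could in principle touch the barrier tangentially. The clean way to handle this is a standard Gronwall/comparison trick: set $\psi = \phi - \phi_{\max}$, rewrite the ODE as $\mathrm{d}_t \psi = -2(\psi + \phi_{\max})|\S| f$, and observe that whenever $\psi\geq 0$ we may factor $f = f - f(\phi_{\max} + (\phi-\phi_{\max})^+,p,I) + \text{(nonnegative)}$ and bound $|f(\phi,\cdot)-f(\phi_{\max},\cdot)|\leq L|\psi|$ by smoothness of $f$, which leads to an inequality of the form $\mathrm{d}_t \psi^+ \leq C(t)\,\psi^+$ along characteristics. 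A Gronwall argument then propagates $\psi^+|_{t=0}=0$ to $\psi^+\equiv 0$, yielding the desired bound.
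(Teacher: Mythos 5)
Your proof is correct, but it takes a genuinely different route from the paper. You work in Lagrangian coordinates: along characteristics the mass equation becomes a scalar ODE, the lower bound follows from the exponential (sign-preserving) representation, and the upper bound from a comparison argument in the region $\{\phi\geq\phi_{\mathrm{max}}\}$. The paper instead uses a renormalization (Eulerian) argument: it multiplies $\partial_t\phi+\div(\phi\u)=0$ by $\beta'(\phi)$ for carefully chosen $\mathcal C^1$ functions $\beta$ (a regularized negative part for positivity, and $(\phi-\phi_{\mathrm{max}})_+^2$ for the upper bound), integrates in space, and shows $\frac{d}{dt}\int\beta(\phi)\leq 0$ using the sign of $(\phi\beta'-\beta)\,\div\u$. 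The key physical input is identical in both proofs: $\phi\geq\phi_{\mathrm{max}}$ implies $I_{\mathrm{eq}}(\phi)\leq 0\leq I$, hence $f\geq 0$ and $\div\u\geq 0$ by the equilibrium conditions (the paper's implication~\eqref{0933}). Your approach is more elementary and gives pointwise control directly, at the price of needing well-defined characteristics (fine here, since the theorem assumes a smooth solution); the paper's integral argument is more robust to low regularity and avoids solving the flow ODE.

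One remark on your Step 2: the Gronwall machinery is both muddled and unnecessary. The factorization $f=f-f(\phi_{\mathrm{max}}+(\phi-\phi_{\mathrm{max}})^+,p,I)+\text{(nonnegative)}$ is vacuous when $\psi\geq 0$ (the subtracted term is then just $f(\phi,p,I)$ itself), and no Lipschitz bound on $f$ is needed. The point is that $f\geq 0$ holds on the entire region $\phi\geq\phi_{\mathrm{max}}$, not merely on the barrier $\phi=\phi_{\mathrm{max}}$: indeed $I_{\mathrm{eq}}(\phi)=(\phi_{\mathrm{max}}-\phi)/\Delta\phi\leq 0\leq I$ there. Combined with $\phi\geq 0$ from Step 1, this gives $\mathrm d_t\phi=-2\phi|\S|f\leq 0$ throughout $\{\phi\geq\phi_{\mathrm{max}}\}$, so $\psi^+$ is nonincreasing along every characteristic and the tangential-touching worry disappears; no comparison function or Gronwall constant is required.
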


\begin{proof} The proof is based on the following observation:\par
If $(\phi,\u)$ are regular and satisfy the equation $\partial_t \phi + \div ( \phi \u ) = 0$ then for any function $\beta\in \mathcal C^1(\mathbb R,\mathbb R)$, we have
\begin{equation}\label{dt(beta)}
    \partial_t \beta(\phi) + \div ( \beta(\phi) \u ) + (\phi \beta'(\phi) - \beta(\phi)) \, \div \, \u = 0.
\end{equation}
\textit{Step 1: Positiveness of $\phi$.} In order to show that~$\phi$ remains positive as soon as $\phi|_{t=0}\geq 0$, we will use the above equation with the functions~$\beta_\varepsilon$ defined, for all $\varepsilon>0$, by
\begin{equation}
    \beta_\varepsilon(\phi) = \left\{\begin{aligned} 
    & 0 && \text{if $\phi \geq \varepsilon$},\\
    & -\phi && \text{if $\phi \leq -\varepsilon$},\\
    & \frac{1}{4\varepsilon}(\phi-\varepsilon)^2 && \text{if $|\phi| \leq \varepsilon$}.
    \end{aligned}\right.
\end{equation}
It is easy to show that $\beta_\varepsilon$ is of class~$\mathcal C^1$ and that
\begin{equation}
    \phi \beta_\varepsilon'(\phi) - \beta_\varepsilon(\phi) = \left\{\begin{aligned} 
    & 0 && \text{if $|\phi| \geq \varepsilon$},\\
    & \frac{1}{4\varepsilon}(\phi^2-\varepsilon^2) && \text{if $|\phi| \leq \varepsilon$}.
    \end{aligned}\right.
\end{equation}
In particular, we have $\displaystyle |\phi \beta_\varepsilon'(\phi) - \beta_\varepsilon(\phi)| \leq \frac{\varepsilon}{4}$ so that after integrating the equation~\eqref{dt(beta)}, we obtain
\begin{equation}
    \frac{d}{dt} \int \beta_\varepsilon(\phi) \leq \frac{\varepsilon}{4}\int |\div \, \u|.
\end{equation}
By passing to the limit when $\varepsilon$ tends to $0$, we deduce that
\begin{equation}
    \frac{d}{dt} \int \beta(\phi) \leq 0
    \quad \text{where} \quad 
    \beta(\phi) = \left\{\begin{aligned} 
    & 0 && \text{si $\phi \geq 0$},\\
    & -\phi && \text{si $\phi \leq 0$}.
    \end{aligned}\right.
\end{equation}
The condition $\phi|_{t=0}\geq 0$ means that $\beta(\phi|_{t=0})=0$. Since $\int \beta(\phi)$ decreases with time, and is by definition always positive, we deduce that, for any time, $\beta(\phi)=0$ which means that $\phi \geq 0$.\\

\textit{Step 2: Upper bound on $\phi$.} In general, there is no reason why a solution~$\phi$ of the equation $\partial_t \phi + \div ( \phi \u ) = 0$ should remain bounded if the divergence of the field~$\u$ is not zero. Here, we will use the following additional information which comes from the equilibrium conditions~\eqref{equilibrium-condition}. The equation~\eqref{syst4b} specifying the divergence of the velocity field implies:
\begin{equation}\label{0933}
    \phi \geq \phi_{\mathrm{max}} \quad \Longrightarrow \quad \div \, \u \geq 0.
\end{equation}
Indeed, if $\phi \geq \phi_{\mathrm{max}}$ then $\phi \geq \phi_{\mathrm{max}} - \Delta\phi \, I$ which is equivalent to $I \geq I_{\mathrm{eq}}(\phi)$. According to the condition~\eqref{equilibrium-condition} we have, in this case, $f \geq 0$ which implies $\div \, \u \geq 0$.

Next, we use the same argument as for showing the positiveness of~$\phi$ but with the function~$\beta$ defined by
\begin{equation}
    \beta(\phi) = \left\{\begin{aligned} 
    & 0 && \text{if $\phi \leq \phi_{\mathrm{max}}$},\\
    & (\phi-\phi_{\mathrm{max}})^2 && \text{if $\phi \geq \phi_{\mathrm{max}}$}.
    \end{aligned}\right.
\end{equation}
This function is of class~$\mathcal C^1$ and we have
\begin{equation}
    \phi\beta'(\phi)-\beta(\phi) = \left\{\begin{aligned}
    & \qquad 0 && \text{if $\phi \leq \phi_{\mathrm{max}}$},\\
    & \phi^2-\phi_{\mathrm{max}}^2 && \text{if $\phi \geq \phi_{\mathrm{max}}$}.
    \end{aligned}\right.
\end{equation}
Using this expression and the implication~\eqref{0933}, we notice that $(\phi \beta'(\phi)-\beta(\phi)) \div \, \u \geq 0$ so that the integration with respect to the space variables of the equation~\eqref{dt(beta)} implies
\begin{equation}
    \frac{d}{dt} \int \beta(\phi) \leq 0.
\end{equation}
Thus, if we have $\phi|_{t=0} \leq \phi_{\mathrm{max}}$ then $\beta(\phi|_{t=0})=0$ and $\beta(\phi)=0$ so that the result follows, namely $\phi \leq \phi_{\mathrm{max}}$.
\end{proof}
%%%%%%%%%%%%%%%%%%%%%%%%%%%%%%%%%%%%%%%%%
%%%%%%%%%%%%%%%%%%%%%%%%%%%%%%%%%%%%%%%%%
%%%%%%%%%%%%%%%%%%%%%%%%%%%%%%%%%%%%%%%%%
\section{Physical examples}\label{sec:examples}
%%%%%%%%%%%%%%%%%%%%%%%%%%%%%%%%%%%%%%%%%
%%%%%%%%%%%%%%%%%%%%%%%%%%%%%%%%%%%%%%%%%
%%%%%%%%%%%%%%%%%%%%%%%%%%%%%%%%%%%%%%%%%

The objective of this section is to complete the generic fluidised~\eqref{syst1b}-\eqref{syst4b} model presented in Section 4, with constitutive laws specified by the yield function $Z$ and the dilatancy function $f$.
The fundamental question is how to determine~$Z$ and~$f$ in such a way that the physics of granular flows is properly described and the mathematical properties introduced in the previous section are verified, precisely the energy of the system must be dissipated, the volume fraction must be positive and upper bounded, and the model must be linearly stable.
These last three properties are verified if the assumptions of Theorems~\ref{th1},~\ref{th3} and~\ref{th4} are fulfilled, which is the case if the dissipation condition~\eqref{dissipation-general}, the stability conditions~\eqref{C1}--\eqref{C3} and the equilibrium consistency condition~\eqref{equilibrium-condition} are satisfied.

Note that due to the Theorem~\ref{th2}, this last condition requires that the sign of the function $f$ corresponds to a deviation of the granular flow from an isochoric equilibrium, \textit{i.e.} of constant volume. The expansion or contraction of the granular material near this equilibrium state is then taken into account in the model. The function~$f$ must therefore fulfil the condition~\eqref{equilibrium-condition} for both physical and mathematical reasons.

The question of the choice of rheology, \textit{i.e.} the choice of the function $Z$, must now be addressed. This choice is guided by the state of knowledge on the physics of granular materials. The model classically used is the Drucker-Prager model, which is a multidimensional form of the Mohr-Coulomb law (see for example~\cite{forterre08}). Later, the $\mu(I)$--rheology emerged thanks to the work of the Groupement de Recherche Milieux Divisés~\cite{gdrmidi} and became established. 
In practice, these constitutive laws defining the deviatoric stress tensor are obtained by fitting experimental measurements and can therefore be used as a starting point for the construction of a model.

Our methodology is as follows: once the yield function~$Z$ is prescribed according to the criteria discussed above, the stability condition~\eqref{C1}, which can be reduced to a differential equation to the function~$f$, is used to determine~$f$. At this stage, $f$ is known up to a constant which is independent on the inertial number but may depend on the volume fraction $\phi$ and the pressure $p$. This constant will be fixed so that at equilibrium, the divergence of the velocity field equals zero in agreement with the critical state theory proposed by Roux and Radjaï in~\cite{roux98}.
The last step will be to make sure that the other conditions~\eqref{dissipation-general}, \eqref{C2}, \eqref{C3} and~\eqref{0933} are fulfilled. Following this approach, we derive hereafter several fluidised and non-isochoric granular models.

Before going into the details of the choice of the constitutive law and the determination of the rate of volume change, let us mention that the equation~\eqref{C1} is a linear in~$Z$ and~$f$. 
Thus, any linear combination of pairs of functions $(Z,f)$ verifying~\eqref{C1} will also be a solution of~\eqref{C1}. 
Moreover, as no derivative with respect to the variable~$\phi$ appears in the calculation of~$f$ (see the condition~\eqref{C1}), 
the coefficients of such linear combinations may depend on~$\phi$. This observation will facilitate the developments presented in what follows.

Finally, it should be mentioned that relatively simple models should be preferred as long as a numerical scheme can be designed and implemented. However, this is beyond the scope of this paper and will be the subject of future work.

%%%%%%%%%%%%%%%%%%%%%%%%%%%%%%%%%%%%%%%%%
\subsection{Stabilisation of the Drucker-Prager model using a divergence condition}\label{ssec-DP1}
%%%%%%%%%%%%%%%%%%%%%%%%%%%%%%%%%%%%%%%%%

Granular media have been studied extensively (see for instance~\cite{forterre08} and the references therein) and several rheologies have been proposed. It is therefore useful to have in mind the usual forms that the constitutive law, defining the function $Z$, can take in the literature. One of the first choices comes from solid mechanics and is known as Drucker-Prager model. Following the book of Andreotti \textit{et al}~\cite[Table 4.1]{andreotti12} the deviatoric stress tensor is described by
\begin{equation}
    \btau = \sin (\delta)\, p \frac{\S}{|\S|},
\end{equation}
where~$\delta$ corresponds to the internal angle of friction. Note that this rheology is most often written using the tangent function instead of the sine function, but with a different angle (see~\cite[p.144]{andreotti12} for the details).
\begin{comment}
\begin{remark}\label{rem:friction-angle}
As it pointed out in~\cite[p.144]{andreotti12}, the notion of the angle of friction can be ambiguous. The~$\delta$ angle used here corresponds to angle of friction introduced in the Mohr–Coulomb model and measured in triaxial tests. It should not be confused with the angle of repose of
a sand heap~$\delta_{\mathrm{shear}}$ which is measured in the plane-shear configuration. Indeed, we have
$\tan(\delta_{\mathrm{shear}}) = \sin(\delta)$.
\end{remark}
\end{comment}

In this case, the function~$Z$ previously introduced reads
\begin{equation}
    Z(\phi,I) = \sin (\delta).
\end{equation}
The stability condition~\eqref{C1} implies that $\partial_I(If(\phi,p,I)) = \sin (\delta)$. Integrating this equation, we deduce that there exists a function~$K$ independent of $I$ such that $If(\phi,p,I) = \sin (\delta) \, I + K(\phi,p)$.
In order to determine the function~$K$, we invoke the consistency condition at equilibrium~\eqref{equilibrium-condition} and we obtain
\begin{equation}\label{f-DP}
    %Z(\phi,I) = \sin (\delta) 
    %\quad \Longrightarrow \quad
    f(\phi,p,I) = \sin (\delta) \Big( 1 - \frac{I_{\mathrm{eq}}(\phi)}{I} \Big).
\end{equation}
Noting that $I_{\mathrm{eq}}(\phi)$ and $I$ are positive, it immediately follows that $Z \geq f$ so that the
dissipation condition~\eqref{dissipation-general} holds.
The equilibrium condition~\eqref{equilibrium-condition} and the first stability condition~\eqref{C1} are also fulfilled by construction of~$f$ from~$Z$. The two other stability conditions~\eqref{C2} and~\eqref{C3}, which in this particular case reads $Z + I\partial_I Z \geq 0$ and $\partial_I f >0$, are obvious.

%In that case (but the same type of observation can be made in many cases), 
By using the definition~\eqref{def:I} of the inertial number~$I$ and the expression of~$f$ found above~\eqref{f-DP}, the dilatancy law~\eqref{def:div} can be rewritten 
\begin{equation}\label{def:div-bis}
    \div \, \u = 2 \sin(\delta) |\S| - \frac{2 \sin(\delta)}{d \sqrt{\rho_s}} I_{\mathrm{eq}}(\phi) \sqrt{p}.
\end{equation}
From a physical point of view, this relationship highlights a competition between shear effects which tend to increase the volume (solid particles separate when the material is sheared) and pressure effects which compress the flow. Figure 2 illustrates these effects on the granular medium.
\begin{figure}
\begin{center}
\begin{tikzpicture}[scale=0.4]
\fill  [color=green!20](0,-0.05)--(10,-0.05)--(10,8.05)--(0,8.05)--(0,-0.05)--cycle;
\draw  [thick] (0,-0.05)-- (10,-0.05);
\draw  [thick] (0,8.05)-- (10,8.05);
\draw  [black, ball color=purple] (5,4) circle (1.5);
\draw  [black, ball color=purple] (6.7,6.5) circle (1.5);
\draw  [black, ball color=purple] (3.3,1.5) circle (1.5);
\draw  [<-, double distance = 2pt, thick] (2,8.05) -- (5,8.05);
\draw  [->, double distance = 2pt, thick] (5,-0.05) -- (8,-0.05);
\draw  [very thick,->](15,7.5) arc (35:125:4);
\draw (12,9) node[below]{Shear} ;
\draw (12,8) node[below]{stress} ;
\fill  [color=green!20](14,0.95)--(24,0.95)--(24,7.05)--(14,7.05)--(14,0.95)--cycle;
\draw  [thick] (14,0.95)-- (24,0.95);
\draw  [thick] (14,7.05)-- (24,7.05);
\draw  [black, ball color=purple] (19,4) circle (1.5);
\draw  [black, ball color=purple] (21.6,5.5) circle (1.5);
\draw  [black, ball color=purple] (16.4,2.5) circle (1.5);
\draw  [very thick,<-](29,6.5) arc (35:125:4);
\draw (26,7.7) node[below]{Normal} ;
\draw (26,6.7) node[below]{stress} ;
\fill  [color=green!20](28,1.95)--(38,1.95)--(38,6.05)--(28,6.05)--(28,1.95)--cycle;
\draw  [thick] (28,1.95)-- (38,1.95);
\draw  [thick] (28,6.05)-- (38,6.05);
\draw  [black, ball color=purple] (33,4) circle (1.5);%1
\draw  [black, ball color=purple] (36.0,4.5) circle (1.5);%2
\draw  [black, ball color=purple] (30.0,3.5) circle (1.5);%3
\draw  [<-, double distance = 2pt, thick] (33,6.05) -- (33,8.05);
\draw  [->, double distance = 2pt, thick] (33,-0.05) -- (33,1.95);
\draw  (5,10) node[above] {$\div \, \u > 0$};
\draw  (5,9) node[above] {volume increases};
\draw  (33,10) node[above] {$\div \, \u < 0$};
\draw  (33,9) node[above] {volume decreases};
\end{tikzpicture}
\end{center}
\caption{Volume change under shear (left) and normal (right) stresses.}
\label{fig:stress_effect}
\end{figure}
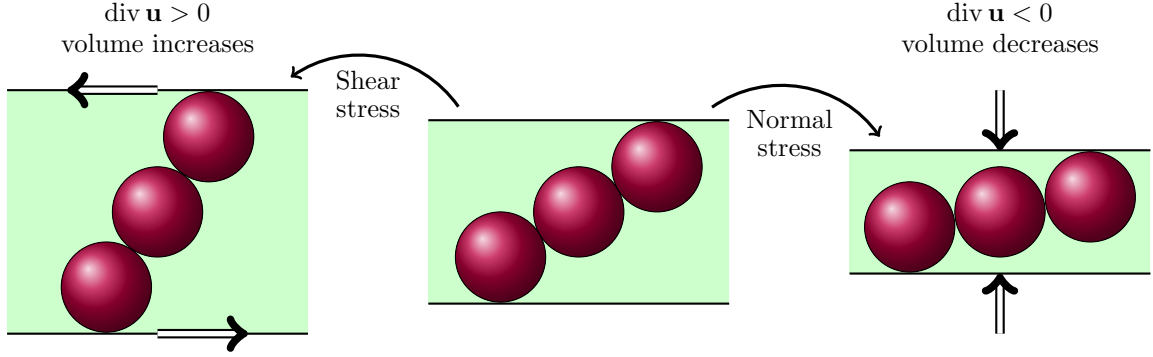

Using the definition of the equilibrium via~$I_{\mathrm{eq}}(\phi)$ or via~$\phi_{\mathrm{eq}}(I)$, see~\eqref{I(phi)} or~\eqref{phi(I)}, it is possible to rewrite the function~$f$ obtained in~\eqref{f-DP} as follows
\begin{equation}
    f(\phi,p,I) = \frac{\sin(\delta)}{\Delta\phi \, I} \big( \phi - \phi_{\mathrm{eq}}(I) \big).
\end{equation}
Written in this form, the dilatancy model obtained is close to the one proposed by Roux and Radjaï~\cite{roux98} for which we have 
\begin{equation}\label{f-RR}
f(\phi,p,I) = a \big( \phi - \phi_{\mathrm{eq}}(I) \big).
\end{equation}

We are now able to write a fluidized non-isochoric granular model based on the Drucker-Prager rheology, which is linearly stable and physically consistent, namely
\begin{align}
& \partial_t \phi + \div ( \phi \, \u ) = 0, \label{syst1DP} \\
& \partial_t ((1-\phi) p_f) + \div ( (1-\phi) p_f \u ) + p_{\mathrm{atm}} \div \, \u = p_{\mathrm{atm}} \div ( \kappa(\phi) \nabla p_f ), \label{syst2DP} \\
& \phi \rho_s \big( \partial_t \u + \u \boldsymbol{\cdot} \nabla \u \big) = \phi \rho_s \g  - \nabla p + \div \Big( \sin (\delta) p \frac{\S}{|\S|} \Big)  - \nabla p_f, \label{syst3DP} \\
& \div \, \u = 2 \sin (\delta) |\S| - 2 \lambda \sin (\delta) (\phi_{\mathrm{max}}-\phi)\sqrt{p}, \label{syst4DP}
\end{align}
where $\lambda = \frac{1}{\Delta\phi d \sqrt{\rho_s}}$.
Moreover, as we have previously seen in the Subsection~\ref{ssec:energy}, the energy for this model is dissipated over time, namely
\begin{equation}
    \frac{d \mathcal E}{dt} + \mathcal D = \int \phi \rho_s \g \cdot \u,
\end{equation}
where
\begin{equation}
\mathcal D = 2 \lambda \sin (\delta) \int (\phi_{\mathrm{max}}-\phi)p\sqrt{p} + \int \kappa(\phi) |\nabla p_f|^2.
\end{equation}
Note that this rheology is known to be linearly unstable when associated with the incompressibility condition~$\div \, \u = 0$, see~\cite[Appendix C]{martin17}. Here, by coupling it with the dilatancy law~\eqref{syst4DP}, we obtain a model satisfying the stability conditions~\eqref{C1}, \eqref{C2} and~\eqref{C3}.

%%%%%%%%%%%%%%%%%%%%%%%%%%%%%%%%%%%%%%%%%
\subsection{Stabilisation of the $\mu(I)$-rheology using a divergence condition}\label{ssec-mu(I)1}
%%%%%%%%%%%%%%%%%%%%%%%%%%%%%%%%%%%%%%%%%

In the class of consitutive laws for granular media, one of them is almost unanimously accepted: this is the $\mu(I)$-rheology. It is basically a kind of generalisation of the Drucker-Prager model in which the effective viscosity depends on the inertial number~$I$. With the notations introduced in this article, this model corresponds to 
\begin{equation}
    Z(\phi,I) = \mu(I),
\end{equation}
where the function~$\mu$ depends on three parameters $I_0>0$, $\mu_2>\mu_1>0$ and is given by
\begin{equation}\label{mu(I)}
    \mu(I) = \mu_1+\frac{\mu_2-\mu_1}{1+I_0/I}.
\end{equation}

Following~\cite[p.10]{forterre08} or~\cite[p.226]{andreotti12}, typical values of the constants obtained for mono-dispersed glass beads are $\mu_1 = \tan (21^o)$, $\mu_2 = \tan (33^o)$ and $I_0 = 0.3$.
Note that when $I$ tends to $0$, the Drucker-Prager model is recovered, which means that $\mu_1=\sin(\delta)$ must be related to the internal friction.
It may also be noted that these values are obtained experimentally, and that some authors specify that~$I_0$ can depend on~$\phi$ (see for instance~\cite[equation (2.12)]{gray14} or~\cite[Appendix A]{jop05}).

Let us now see if it is possible to derive a complete and stable model with this choice of rheology. First of all, the condition~\eqref{C2} is satisfied since $\mu>0$ and $\mu'>0$. In order to ensure that the condition~\eqref{C1} is satisfied, the function~$f$ must be a solution of the differential equation 
\begin{equation}
    \partial_I(If) = \mu(I) - \frac{I}{2}\mu'(I).
\end{equation}
By integrating this ordinary differential equation, with respect to~$I$, and using the condition $f(\phi,p,I_{\mathrm{eq}}(\phi))=0$, we find
\begin{equation}
    f(\phi,p,I) = F(I) - \frac{I_{\mathrm{eq}}(\phi)}{I}F(I_{\mathrm{eq}}(\phi)),
\end{equation}
where $F(I)=\frac{3}{2I}M(I) - \frac{1}{2}\mu(I)$, $M$ being the primitive of~$\mu$ which vanishes at~$I=0$. Using the explicit expression~\eqref{mu(I)}, we derive
\begin{equation}\label{F-mu(I)}
    F(I) = \mu_2 + \frac{\mu_2-\mu_1}{2}H\Big( \frac{I}{I_0} \Big) \quad \text{where} \quad H(x) = \frac{1}{1+x}-\frac{3\ln(1+x)}{x}.
\end{equation}
Finally, imposing the third stability condition~\eqref{C3} amounts to writing
\begin{equation}\label{C3-bis}
    F'(I) + \frac{I_{\mathrm{eq}}(\phi)}{I^2}F(I_{\mathrm{eq}}(\phi)) > 0.
\end{equation}
By observing that $H'(x)>0$ for $x>0$, and that $H(0)=-2$, we deduce that 
\begin{equation}
    F'(I) + \frac{I_{\mathrm{eq}}(\phi)}{I^2}F(I_{\mathrm{eq}}(\phi)) = \frac{\mu_2-\mu_1}{2I_0} H'\Big(\frac{I}{I_0}\Big) + \frac{I_{\mathrm{eq}}(\phi)}{I^2} \Big( \mu_2 + \frac{\mu_2-\mu_1}{2} H\Big(\frac{I}{I_0}\Big) \Big) > 0,
\end{equation}
which means that the condition~\eqref{C3} (equivalent to~\eqref{C3-bis}) is always satisfied.

In the case of the $\mu(I)$-rheology, the energy of the model is also dissipated, as 
\begin{equation}
\mathcal D = 2\int p|\S|(\mu(I)-F(I)) + \frac{2}{d\sqrt{\rho_s}} \int I_{\mathrm{eq}}(\phi)F(I_{\mathrm{eq}}(\phi)) p\sqrt{p}  
\end{equation}
is positive since
\begin{equation}
    \mu(I)-F(I) = \frac{3(\mu_2-\mu_1)}{2} \Big( \frac{\ln(1+I/I_0)}{I/I_0} - \frac{1}{1+I/I_0}\Big) > 0.
\end{equation}

As a consequence, the fluidised and non-isochoric granular model, based on the $\mu(I)$-rheology, satisfies all the required properties and reads
\begin{align}
& \partial_t \phi + \div ( \phi \, \u ) = 0, \label{syst1mu(I)} \\
& \partial_t ((1-\phi) p_f) + \div ( (1-\phi) p_f \u ) + p_{\mathrm{atm}} \div \, \u = p_{\mathrm{atm}} \div ( \kappa(\phi) \nabla p_f ), \label{syst2mu(I)} \\
& \phi \rho_s \big( \partial_t \u + \u \boldsymbol{\cdot} \nabla \u \big) = \phi \rho_s \g  - \nabla p + \div \Big( \mu(I) p \frac{\S}{|\S|} \Big)  - \nabla p_f, \label{syst3mu(I)} \\
& \div \, \u = 2 F(I) |\S|  - 2\widetilde{F}(\phi)\sqrt{p}, \label{syst4mu(I)}
%& \div \, \u = 2 F(I) |\S|  - 2\widetilde{F}(\phi)\sqrt{p}, \label{syst4mu(I)}
\end{align}
where the functions $F$ and $\widetilde{F}$ are defined by
\begin{align}\label{F-mu(I)}
    & F(I) = \mu_2 + \frac{\mu_2-\mu_1}{2}\Big( \frac{1}{1+I/I_0}-\frac{3\ln(1+I/I_0)}{I/I_0} \Big), \\
    & \widetilde{F}(\phi) = \frac{1}{d\sqrt{\rho_s}}I_{\mathrm{eq}}(\phi)F(I_{\mathrm{eq}}(\phi))\quad \text{with} \quad I_{\mathrm{eq}}(\phi) =  \frac{\phi_{\mathrm{max}}-\phi}{\Delta\phi}.
\end{align}

Of course, when~$I$ goes to~$0$, we recover the Drucker-Prager model, presented in the previous subsection, with $\mu_1=\sin(\delta)$ since
\begin{equation}
    \lim_{I\to 0} \mu(I) = \mu_1 \quad \text{and} \quad 
    \lim_{I\to 0} F(I) = \mu_1.
\end{equation}

%%%%%%%%%%%%%%%%%%%%%%%%%%%%%%%%%%%%%%%%%
\subsection{Stabilisation for a generic rheology using a divergence condition}\label{ssec-mu(I)2}
%%%%%%%%%%%%%%%%%%%%%%%%%%%%%%%%%%%%%%%%%

Since the stability conditions~\eqref{C1}, \eqref{C2} and~\eqref{C3} do not involve a derivative with respect to the volume fraction~$\phi$, it is also possible to propose a law of the~$\mu(I)$--type taking into account the dependence with respect to~$\phi$.
This means that, prescribing $Z(\phi,I)$, integrating the stability condition~\eqref{C1} and using the equilibrium condition~\eqref{equilibrium-condition}, we find
\begin{equation}\label{f-general}
    f(\phi,p,I) = W(\phi,I) - I_{\mathrm{eq}}(\phi) W(\phi,I_{\mathrm{eq}}(\phi)) \frac{1}{I},
\end{equation}
where $W$ is defined from $Z$ by
\begin{equation}
    W(\phi,I) = \frac{3}{2I}\int^I_{I_1} Z(\phi,J)\, \mathrm dJ - \frac{Z(\phi,I)}{2}.
\end{equation}
Note that the choice of the constant~$I_1$ has no influence on the value of the function $f$ since replacing~$W$ by~$W+\frac{3I_1}{2I}$ does not change~$f$.
The choice of~$I_1$ can be used in such a way that the integral makes sense, for example if~$Z$ admits a singularity at a point which one wishes to ignore.

By performing an expansion in the~$\phi$ variable in the vicinity of the equilibrium $\phi=\phi_{\mathrm{eq}}(I)$, it is possible to rewrite~$f$ as
\begin{equation}
    f(\phi,p,I) = \frac{W(\phi_{\mathrm{eq}}(I),I)}{\Delta\phi \, I} \big( \phi - \phi_{\mathrm{eq}}(I) \big) + \mathcal{O}\big(\big( \phi - \phi_{\mathrm{eq}}(I) \big)^2 \big),
\end{equation}
which again is similar to the model~\eqref{f-RR} and therefore takes the same form as the one proposed in~\cite{roux98} with $a=\frac{W(\phi_{\mathrm{eq}}(I),I)}{\Delta\phi \, I}$.

In the Table~\ref{tab:1}, we show the values of~$f$ obtained by the general formula~\eqref{f-general} when polynomial functions in the variable $I$ are chosen for the function~$Z$. Because of the linearity of the $Z\mapsto f$ correspondence, as indicated in the introduction to the Section~\ref{sec:examples}, it is possible to write $Z$ as linear combinations of each row of the table with coefficients depending on~$\phi$.
\begin{table}
\begin{center}
\begin{tabular}{|c|c|c|}
\hline
$Z(\phi,I)$ & $f(\phi,p,I)$ & $\text{Contribution to dissipation }\mathcal D$ \\
\hline\hline
$1$ & $\displaystyle 1 - \frac{I_{\mathrm{eq}}(\phi)}{I}$ & $\displaystyle \frac{2I_{\mathrm{eq}}(\phi)}{d\sqrt{\rho_s}}p\sqrt{p}$ \rule[-15pt]{0pt}{35pt} \\
\hline
$I$ & $\displaystyle \frac{1}{4} \Big( I - \frac{I_{\mathrm{eq}}(\phi)^{2}}{I} \Big)$ & $\displaystyle \frac{3}{4} I + \frac{1}{4}\frac{I_{\mathrm{eq}}(\phi)^{2}}{d\sqrt{\rho_s}}p\sqrt{p}$ \rule[-15pt]{0pt}{35pt} \\
\hline
$I^n,\;(n\neq -1)$ & $\displaystyle \frac{2-n}{2(n+1)} \Big( I^n - \frac{I_{\mathrm{eq}}(\phi)^{n+1}}{I} \Big)$ & $\displaystyle \frac{3n}{2(n+1)} I^n + \frac{2-n}{2(n+1)}\frac{I_{\mathrm{eq}}(\phi)^{n+1}}{d\sqrt{\rho_s}}p\sqrt{p}$ \rule[-15pt]{0pt}{35pt} \\
\hline
\end{tabular}
\end{center}
\caption{Dilatancy functions $f(\phi,p,I)$ for yield functions $Z(\phi,p,I)$ given as powers of the inertial number. The last column shows the corresponding contribution to the dissipation rate $\mathcal D$, in the energy equation.}
\label{tab:1}
\end{table}

\begin{remark}\label{rem-viscous3}
The case $Z(\phi,I) = I^2$, corresponding to the case $n=2$ in the Table~\ref{tab:1}, is particular. Indeed, the corresponding dilatancy function is $f=0$. Using the definition of the inertial number and  the deviatoric stress tensor, such a choice of function~$Z$ corresponds to the following rheology 
\begin{equation}
    \btau = Z(\phi,I) p \frac{\S}{|\S|} = d^2 \rho_s |\S| \S,
\end{equation}
which is purely viscous.
More generally, viscous rheology may be written as
\begin{equation}\label{eq:viscous-contribution}
    \btau = \eta(\phi,|\S|) \S.
\end{equation}
Such rheologies are not treated in the present study as noted in the Remarks~\ref{rem-viscous1} and~\ref{rem-viscous2}.
However, the stability condition equivalent to~\eqref{C1} in~\cite[p.5]{barker17}, shows that $f=0$ is compatible with~\eqref{eq:viscous-contribution}. It will therefore be possible to add any viscous contribution to the stress without changing the stability conditions.

Note also that the case where $Z=I^n$ with $n>2$ produces a negative dissipation term. It is therefore likely that the choice of rheology containing such terms is not dissipative, and should therefore be excluded.
\end{remark}

%%%%%%%%%%%%%%%%%%%%%%%%%%%%%%%%%%%%%%%%%
\subsection{Rheologies with dilatancy effects}%\label{sec:fluidisation}
%%%%%%%%%%%%%%%%%%%%%%%%%%%%%%%%%%%%%%%%%

\subsubsection{The Ducker-Prager rheology with dilatancy effect}%\label{sec:fluidisation}
%%%%%%%%%%%%%%%%%%%%%%%%%%%%%%%%%%%%%%%%%
In order to take into account the variations of the volume fraction of the granular medium in the rheology, Wood~\cite{wood90} proposed to modify the Drucker-Prager model by introducing what is called a dilatation angle~$\psi$.
This angle~$\psi$ is a measure of the ratio between the relative vertical and horizontal displacements between two layers of grains when they are sheared. It can be positive (expansion) or negative (contraction).
Intuitively, we can guess that a close packing that has to dilate in order to deform ($\psi > 0$) has a friction coefficient larger than that of a loose packing that will undergo compaction ($\psi < 0$). The granular viscosity is then an increasing function of~$\psi$, and must be equal to $\sin(\delta)$ when the dilatancy angle is zero.
More precisely, the resulting laws, proposed in~\cite[p.150-151]{andreotti12}, are written as 
\begin{align}
    & \btau = \sin (\delta+\psi)\, p \frac{\S}{|\S|}\label{eq:84}\\
    & \div \, \u = 2|\S| \sin (\psi).\label{eq:85}
\end{align}

\begin{remark}\label{rem:dilatance-geometry}
As for the friction angle, some authors use the tangent function instead of the sine function. Specifically, the dilatancy angle is the angle of motion relative to the horizontal arising from displacement, with $dY = \tan(\psi)dX$ where~$dY$ and~$dX$ are the vertical and horizontal displacements. This definition of~$\psi$ is specific to planar shear but can be generalised by the equation~\eqref{eq:85}.
The diagram below represents the dilatancy angle for a two-dimensional flow whose velocity field depends only on the vertical variable~$y$.\\[0.3cm]
\begin{minipage}{7.5cm}
\begin{center}
\begin{tikzpicture}[scale=1]
\draw  [->, thick] (0,0)-- (5,0);
\draw (4.6,0) node[below]{$dX$} ;
\draw (3,0) node[below, color=blue]{$\partial_y u$} ;
\draw  [->, thick] (5,0)-- (5,2);
\draw (5,1.7) node[right]{$dY$} ;
\draw (5,0.9) node[right, color=blue]{$\partial_y v = \div \, \u$} ;
\draw (0,0)-- (5,2);
\draw (2.3,1.2) node[right,rotate=22, color=blue]{$2|\S|$} ;
\draw  [->](2,0) arc (0:22:2);
\draw (2.2,0.25) node[above]{$\psi$} ;
\end{tikzpicture}
\end{center}
\end{minipage}
\begin{minipage}{8cm}
In a two-dimensional case and assuming that the velocity field is written $\u = (u(y),v(y))$, we have $\div \, \u = \partial_y v$ and
$$\S = \frac{1}{2} \begin{pmatrix} -\partial_y v & \partial_y u \\ \partial_y u & \partial_y v\end{pmatrix} \; \text{ so that } \; 2|\S| = \sqrt{(\partial_y u)^2 + (\partial_y v)^2}.$$
In this two-dimensional case, the definition of~$\psi$ via $\tan(\psi)=\frac{dY}{dX}$ coincides with $\sin(\psi)=\frac{\div \, \u}{2|\S|}$.
\end{minipage}
\par\vspace{0.3cm}
The definition of the dilatancy angle~$\psi$ in the case of a $3$-dimensional flow is different. In~\cite[p.150-151]{andreotti12}, the authors define~$\psi$ by the formula~\eqref{eq:85} by replacing the constant~$2$ by~$3$, while explaining that this definition no longer coincides with that given in the case of simple shear.
\par
Nevertheless, it is possible to do the same reasoning as in dimension~$2$ as shown below.
\par
\vspace{0.3cm}
\begin{minipage}{10cm}
\begin{center}
\begin{tikzpicture}[scale=1]
\fill [color=cyan!50] (0,0)--(4.5,-0.5)--(5,0)--(5,0.5)--(6.5,0.5)--(5,-1)--(-2,-1)--(-0.5,0.5)--(1.2,0.5)--cycle ;
\fill [color=cyan!15] (0,0)--(4.5,-0.5)--(5,0)--(5,0.5)--(1.2,0.5)--cycle ;
\fill [color=cyan!25] (0,0)--(4.5,-0.5)--(5,0)--cycle ;
\draw  [dashed] (0,0)--(5,0);
\draw  [->, thick] (0,0)--(4.5,-0.5);
\draw (4.3,-0.5) node[below]{$dX$} ;
\draw (2.2,-0.3) node[below, color=blue]{$\partial_z u$} ;
\draw  [->, thick] (4.5,-0.5)-- (5,0);
\draw (5,-0.1) node[right]{$dY$} ;
\draw (4.7,-0.4) node[right, color=blue]{$\partial_z v$} ;
\draw  [->, thick] (5,0)-- (5,2);
\draw (5,1.7) node[right]{$dZ$} ;
\draw (5,0.9) node[right, color=blue]{$\partial_z w = \div \, \u$} ;
\draw (0,0)-- (5,2);
\draw (2.3,1.2) node[right,rotate=22, color=blue]{$\ell$} ;
\draw  [->](2,0) arc (0:22:2);
\draw (2.2,0.25) node[above]{$\psi$} ;
\end{tikzpicture}
\end{center}
\end{minipage}
\begin{minipage}{7cm}
Assuming that $\u = (u(z),v(z),w(z))$.
\par
We have $\div \, \u = \partial_z w$ and
$$\S = \begin{pmatrix}
\phantom{\Big|\!\!} -\frac{1}{3}\partial_z w & 0 & \frac{1}{2}\partial_z u \\
\phantom{\Big|\!\!} 0 & -\frac{1}{3}\partial_z w & \frac{1}{2}\partial_z v \\
\phantom{\Big|\!\!} \frac{1}{2}\partial_z u & \frac{1}{2}\partial_z v & \frac{2}{3}\partial_z w
\end{pmatrix}.$$
\end{minipage}
\par
Hence $2|\S| = \sqrt{(\partial_z u)^2 + (\partial_z v)^2 + \frac{4}{3}(\partial_z w)^2}$ so that the length $\ell$ (see figure) writes
$\ell^2 = 4|\S|^2 - \frac{1}{3}(\div \, \u)^2$. We then obtain
\begin{equation}
    \sin (\psi) = \frac{\div \, \u}{\ell} = \frac{\div \, \u}{\sqrt{4|\S|^2 - \frac{1}{3}(\div \, \u)^2}}.
\end{equation}
It is then possible to express the divergence of the velocity field as follows
\begin{equation}
    \div \, \u = \frac{2 |\S| \sin (\psi)}{\sqrt{1+\displaystyle \frac{1}{3}\sin^2(\psi)}}.
\end{equation}
When the dilatancy angle is small, the above relation is a close approximation to~\eqref{eq:85}. The coefficient $2$ remains valid in the three-dimensional case.
\end{remark}

If the dilatancy angle~$\psi$ remains small, we can use the following approximations\footnote{Curiously, it seems that the development used in~\cite[p.133]{andreotti12} (corrected in the next french edition, but repeated in~\cite{robinson23}) is not correct; the coefficient $\cos(\delta)$ being omitted. For their works, this does not change anything since this constant can be incorporated into the value of~$a$ hereafter.}
\begin{align}
    & \btau \approx \big( \sin (\delta) + \cos (\delta) \psi \big) \, p \frac{\S}{|\S|}\\
    & \div \, \u \approx 2|\S| \psi.
\end{align}
In terms of the notations previously introduced, these laws corresponds to the following functions 
\begin{align}
    & Z(\phi,I) = \sin (\delta) + \cos (\delta) \psi \label{Z1}\\
    & f(\phi,p,I) = \psi.\label{f1}
\end{align}
In a general case, the dilatancy angle~$\psi$ can depend on the inertial number~$I$ and the volume fraction~$\phi$. This angle must be zero at equilibrium, \textit{i.e.}   $\psi(\phi_{\mathrm{eq}}(I),I)=0$. The usual closure proposed by Roux and Radjaï~\cite{roux98} consists in using the first term of the expansion near the equilibrium state, namely
\begin{equation}\label{dilatance-RR}
    \psi(\phi,I) \approx a (\phi - \phi_{\mathrm{eq}}(I)) \qquad \text{where} \quad a=\frac{\partial \psi}{\partial \phi}(\phi_{\mathrm{eq}}(I),I).
\end{equation}
But it is clear that the choice~\eqref{dilatance-RR}, which implies the values of $Z$ via~\eqref{Z1} and~$f$ via~\eqref{f1} does not ensure that the stability conditions are fulfilled. In particular, the relation~\eqref{C1} linking~$Z$ and~$f$ is not satisfied.

The question is now to determine whether it is possible to close the system, \textit{i.e.} to propose an expression for the expansion angle~$\psi$, so that the stability conditions are satisfied.

Using~\eqref{Z1} and~\eqref{f1}, we can rewrite the condition~\eqref{C1} as
\begin{equation}\label{edp:psi}
    (2+\cos(\delta))I\partial_I\psi + 2(1-\cos(\delta)) \psi = 2 \sin(\delta).
\end{equation}
By specifying that at the equilibrium, we must have $\psi(\phi,I_{\mathrm{eq}}(\phi))=0$, we solve~\eqref{edp:psi} and we deduce that the dilatancy angle must have the following expression
\begin{equation}\label{psi}
    \psi(\phi,I) = \frac{\sin(\delta)}{1-\cos(\delta)} \Big( 1 - \Big( \frac{I_{\mathrm{eq}}(\phi)}{I}\Big)^\beta \Big) \qquad \text{where} \quad \beta = \frac{2(1-\cos(\delta))}{2+\cos(\delta)}.
\end{equation}
For common materials such as glass beads or sand, the angle of friction~$\delta$ is of the order of~$30^o$. The power~$\beta$ is then of the order of~$0.1$.

By using this dilatancy angle, the equations~\eqref{Z1} and~\eqref{f1} give an expression for~$Z$ and~$f$, and thus a complete model which fulfills, by construction, the stability condition~\eqref{C1}.
It is not difficult to verify that the other two stability conditions, namely~\eqref{C2} and~\eqref{C3}, are satisfied.

\begin{remark}
For dilatancy angle~$\psi$ given by~\eqref{psi}, it is therefore possible to provide a value for the "constant" introduced in the Roux and Radjaï's model~\cite{roux98}. Indeed, using the equation~\eqref{dilatance-RR}, we find
\begin{equation}
    a = \frac{2\sin(\delta)}{2 + \cos(\delta)} \frac{1}{\Delta\phi \, I}.
\end{equation}
\end{remark}

Finally, the fluidized and non-isochoric granular model based on the Drucker-Prager rheology incorporating the dilatancy effects writes
\begin{align}
& \partial_t \phi + \div ( \phi \, \u ) = 0, \label{syst1DP+} \\
& \partial_t ((1-\phi) p_f) + \div ( (1-\phi) p_f \u ) + p_{\mathrm{atm}} \div \, \u = p_{\mathrm{atm}} \div ( \kappa(\phi) \nabla p_f ), \label{syst2DP+} \\
& \phi \rho_s \big( \partial_t \u + \u \boldsymbol{\cdot} \nabla \u \big) = \phi \rho_s \g  - \nabla p + \div \Big( \big( \sin (\delta) + \cos (\delta) \psi(\phi,I) \big) p \frac{\S}{|\S|} \Big)  - \nabla p_f, \label{syst3DP+} \\
& \div \, \u = 2 |\S| \psi(\phi,I), \label{syst4DP+}
\end{align}
where $\psi(\phi,I)$ is given by~\eqref{psi}.

This model dissipates energy over time since the condition~\eqref{dissipation-general} (that is $Z\geq f$) is satisfied. Indeed, we have 
\begin{equation}
    Z-f = \sin(\delta) \Big( \frac{I_{\mathrm{eq}}(\phi)}{I}\Big)^\beta.
\end{equation}
so that the energy equation rewrites
\begin{equation}
    \frac{d \mathcal E}{dt} + \mathcal D = \int \phi \rho_s \g \cdot \u,
\end{equation}
where
\begin{equation}
\mathcal D = 2 \sin(\delta) \int \Big( \frac{I_{\mathrm{eq}}(\phi)}{I}\Big)^\beta p|\S| + \int \kappa(\phi) |\nabla p_f|^2.
\end{equation}

\subsubsection{The $\mu(I)$-rheology with dilatancy effect}%\label{sec:fluidisation}
%%%%%%%%%%%%%%%%%%%%%%%%%%%%%%%%%%%%%%%%%

As with the Drucker-Prager model, the dilatancy effects can be taken into account in the $\mu(I)$-rheology. In~\cite{robinson23}, Robinson \textit{et al} proposed to modify the stress~$\btau$ and the divergence condition as follows
\begin{align}
    & \btau = \big( \mu(I) + \psi \big) p \frac{\S}{|\S|}\label{eq:86}\\
    & \div \, \u = 2|\S| \psi.\label{eq:87}
\end{align}
Using the same method as previously, we can determine the dilatancy angle~$\psi$ so that the model is stable. In that case, the condition~\eqref{C1} becomes
\begin{equation}
    \partial_I\psi = \frac{2}{3I}\mu(I) - \frac{1}{3}\mu'(I).
\end{equation}
By knowing the expression of the function~$\mu$ (see~\eqref{mu(I)}), it is easy to obtain the solution~$\psi$ which vanishes for $I=I_{\mathrm{eq}}(\phi)$, namely
\begin{equation}\label{psi:mu(I)}
\psi(\phi,I) = G(I)-G(I_{\mathrm{eq}}(\phi)),
\end{equation}
with
\begin{equation}
    G(I) = \frac{2\mu_1}{3}\ln(I/I_0) + \frac{\mu_2-\mu_1}{3} \Big( \frac{1}{1+I/I_0} + 2 \ln(1+I/I_0) \Big).
\end{equation}
The complete model is then very similar to the model~\eqref{syst1DP+}--\eqref{syst4DP+}, the dilatancy angle being given by the relation~\eqref{psi:mu(I)}. In this case, since $Z-f=\mu$, the energy dissipation is given by
\begin{equation}
\mathcal D = \int 2\mu(I) p|\S| + \int \kappa(\phi) |\nabla p_f|^2.
\end{equation}
Note also that in this case, the dilatancy law~\eqref{syst4DP+} can be approximated near the equilibrium by the Roux and Radjaï's model~\cite{roux98} $\div\,\u = 2a (\phi-\phi_{\mathrm{eq}}(I))|\S|$ where the "constant" $a$ is given by
\begin{equation}
    a = \frac{G'(I)}{\Delta\phi}.
\end{equation}

%%%%%%%%%%%%%%%%%%%%%%%%%%%%%%%%%%%%%%%%%%%%%%%%%%%%%%%%%%%%%%%%%%%%%%%%%%%%%%%%%%%%%%%%%%%%%%%

\section{Concluding remarks}
In this paper, we have studied non-isochoric fluidized granular models, \textit{i.e.}   which take into account the fluidisation effects of a compressible interstitial gas and local volume changes. The motivation is to model mixtures of particles, with a high concentration (between~$40$\% and~$60$\% in volume), and a gas, in particular air. Pyroclastic density flows, which are frequent and very devastating, fit into this framework. They are a major hazard in volcanic eruptions because of the great distances they can travel, up to 100 km in some cases, at high speed even on gentle slopes. 
The starting point of this study is the fluid-solid mixing model of Anderson and Jackson, which is simplified here because of the physical characteristics of the interstitial fluid, which is a gas. The compressibility of the latter allows us to transform the mass conservation equation of the gas phase into an equation for the pressure. The effect of the pressure of the interstitial gas is to reduce the friction between the particles and thus make it possible for the flow to accelerate and travel greater distances. This phenomenon has been observed in the laboratory (see~\cite{roche12})  showing that columns of particles (glass beads) travel twice as far when fluidised with an air stream injected from below. 
In order to close the equations for the solid phase, the constitutive laws, written as in Schaeffer~\textit{et al}~\cite{schaeffer19}, are specified in terms of a yield function and a dilatancy function, both of which depend on the volume fraction, inertial number and (solid) pressure. The resulting fluidized granular models is non-isochoric as it allows for volume change, namely the granular flow can expand when sheared. 
Moreover, we impose that the model be linearly stable, that it dissipate energy (over time), that it be compatible with the Roux and Radjaï~\cite{roux98} dilatancy model, and that the volume fraction, which is the solution of the mass conservation equation, be positive and bounded at all times.
Working with this theoretical framework, we have studied dilatancy laws that are compatible with classical rheologies,~\textit{i.e.} Drucker-Prager and~$\mu(I)$, with or without taking into account the effects of dilation in the yield condition. The main objective of this work was to derive fluidized and non-isochoric granular models that satisfy the aforementioned mathematical properties and take into account the known physics of granular materials, in particular in terms of rheology and deformation of the medium when sheared. 
The next steps will be to prove that these models are well-posed, \textit{i.e.} that they admit solutions that can be unique, and to propose stable numerical schemes. Then, numerical studies will have to be carried out to prove their validity, in particular by comparing the results of the simulations with laboratory experiments such as those described in~\cite{roche12}. These are the outlines of our future work on the subject.

\section*{Acknowledgments}
This is contribution no. XXX of the ClerVolc program of the International Research Center for Disaster Sciences and Sustainable Development of the University of Clermont Auvergne.
\medskip

Declaration of Interests. The authors report no conflict of interest.
%%%%%%%%%%%%%%%%%%%%%%%%%%%%%%%%%%%%%%%%%

\bibliography{biblio}
\bibliographystyle{abbrv}

\end{document}